 \title{Reduction maps and minimal model theory}
\author{Yoshinori  Gongyo}
\address{Graduate School of Mathematical Sciences, 
the University of Tokyo, 3-8-1 Komaba, Meguro-ku, Tokyo 153-8914, Japan.}
\email{gongyo@ms.u-tokyo.ac.jp}
\author{Brian Lehmann}
\address{Department of Mathematics, Rice University,
Houston, TX 77005}
\email{blehmann@rice.edu}
\date{2012/3/30, version 2.07}
\newcommand{\Supp}[0]{{\operatorname{Supp}}}
\newcommand{\mult}[0]{{\operatorname{mult}}}
\newtheorem{thm}{Theorem}[section]
\newtheorem{prop}[thm]{Proposition}
\newtheorem{lem}[thm]{Lemma}
\newtheorem{cor}[thm]{Corollary}
\newtheorem{conj}[thm]{Conjecture}
\theoremstyle{definition}
\newtheorem{defi}[thm]{Definition}
\newtheorem{conv}[thm]{Convention}
\newtheorem{rem}[thm]{Remark}
\newtheorem*{ack}{Acknowledgments}
\subjclass{14E30}
\keywords{minimal model, abundance conjecture, reduction map}
\begin{document}
\bibliographystyle{amsalpha+}

\begin{abstract}
We use reduction maps to study the minimal model program.  Our main result is that the existence of a good minimal model for a klt pair $(X,\Delta)$ can be detected on a birational model of the base of the $(K_{X}+\Delta)$-trivial reduction map.  We then interpret the main conjectures of the minimal model program as a natural statement about the existence of curves on $X$.
\end{abstract}

 \maketitle

\tableofcontents

\section{Introduction}  The minimal model program relates the geometry of a complex projective variety $X$ to properties of its canonical divisor $K_{X}$.  One of the central ideas of the program is that $X$ should admit a birational model $X'$  where $K_{X'}$ has particularly close ties to geometry.  More precisely:

\begin{defi}
Let $(X,\Delta)$ be a $\mathbb{Q}$-factorial projective kawamata log terminal pair.  We say that $(X,\Delta)$ has a good minimal model if there is a sequence of $(K_{X} + \Delta)$-flips and divisorial contractions $\varphi: X \dashrightarrow X'$ such that some multiple of $K_{X'} + \varphi_{*}\Delta$ is basepoint free.
\end{defi}

The following conjecture, implicit in \cite{kawamata_pluri}, lies at the heart of the minimal model program:

\begin{conj} \label{existenceofgmm}
Let $(X,\Delta)$ be a $\mathbb{Q}$-factorial projective kawamata log terminal pair such that $K_{X} + \Delta$ is pseudo-effective.  Then $(X,\Delta)$ has a good minimal model.
\end{conj}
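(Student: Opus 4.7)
The plan is to exploit the $(K_X+\Delta)$-trivial reduction map $\pi \colon X \dashrightarrow Z$, which collapses exactly those curves on which $K_X+\Delta$ has degree zero, in order to set up an induction on dimension. When $K_X+\Delta$ is already big, the Birkar--Cascini--Hacon--McKernan theorem supplies a good minimal model directly, so the genuinely hard situation is when $K_X+\Delta$ is pseudo-effective but not big; in that case $\dim Z < \dim X$ and there is room for induction.

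First, I would try to descend $K_X+\Delta$, via a canonical-bundle-type formula in the spirit of Kawamata and Ambro, to a klt pair $(Z',\Delta_Z)$ on a suitable birational model $Z'$ of $Z$ in such a way that $K_{Z'}+\Delta_Z$ is \emph{big} and that good minimal models of the two pairs are in bijection. This matches the paper's stated main result: the existence of a good minimal model should be detectable on a birational model of the base of the trivial reduction map. Second, by induction on dimension applied to $(Z',\Delta_Z)$ one obtains a good minimal model on the base, and the task becomes lifting through the fibration by running a relative $(K_X+\Delta)$-MMP over $Z'$ and combining the semiample model on $Z'$ with the numerically trivial behaviour on the fibers to produce a semiample model on $X$.

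The main obstacles are serious. Constructing the klt descent $(Z',\Delta_Z)$ requires tight control over the singularities and multiplicity data of the reduction map, which is already a delicate application of the canonical bundle formula. The fiberwise step appeals to abundance for numerically trivial klt pairs, which is itself open in general. Finally, the lifting step requires the existence of a relative good minimal model, together with the ability to glue fiber and base. In truth, Conjecture \ref{existenceofgmm} is one of the central open problems of higher-dimensional geometry; a full proof is not expected here, and I read the contribution of the present paper as making the "reduction to the base" step rigorous, so that progress on abundance and on the structure of varieties with numerically trivial log canonical class feeds directly into progress on the existence of good minimal models.
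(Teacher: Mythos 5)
The statement you are asked about is a \emph{conjecture}: the paper offers no proof of Conjecture \ref{existenceofgmm} (it is the main open problem the paper is organized around), so there is no argument of the authors to compare yours against. What the paper actually proves is the reduction statement (Lemma \ref{inductivestep}, Corollary \ref{main result}, Theorem \ref{mm}) and the \emph{equivalence} of Conjecture \ref{existenceofgmm} with Conjecture \ref{vanishingintersectionconj} (Corollary \ref{equivalenceofconjectures}). Your proposal, which you yourself flag as incomplete, follows the paper's strategy in outline but contains one step that is not merely hard but is precisely the open problem: you assert that if $K_X+\Delta$ is pseudo-effective and not big, then the $(K_X+\Delta)$-trivial reduction map has lower-dimensional base, ``so there is room for induction.'' By \cite[0.2, Theorem]{bdpp}, the failure of bigness only gives a movable class on which $K_X+\Delta$ has degree zero in the limit; to get an actual dominating family of $(K_X+\Delta)$-trivial curves (equivalently $\widetilde{\tau}(X,\Delta)<\dim X$) is exactly Conjecture \ref{vanishingintersectionconj}, which the paper shows is \emph{equivalent} to Conjecture \ref{existenceofgmm}. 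Assuming it makes the induction circular; this is why the paper's end product is an equivalence of conjectures rather than a proof.

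Two further points of calibration. First, you ask for the descended pair $(Z',\Delta_{Z'})$ to have $K_{Z'}+\Delta_{Z'}$ big; this is neither needed nor generally true, and the paper's Lemma \ref{inductivestep} (via the canonical bundle formula of Fujino--Mori and Ambro) only produces a klt pair with $P_{\sigma}(\mu^{*}(K_X+\Delta))\sim_{\mathbb{Q}}P_{\sigma}(f'^{*}(K_{Z'}+\Delta_{Z'}))$, from which the equivalence of abundance, and hence of good minimal models (Theorem \ref{goodminimalmodel}), follows; what makes induction possible is only the drop in dimension, $\dim Z'=\widetilde{\tau}(X,\Delta)<\dim X$. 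Second, the ``fiberwise'' difficulty you cite --- abundance for klt pairs with numerically trivial log canonical class --- is not open: for klt pairs with $\nu(K_X+\Delta)=0$ this is Nakayama--Druel (Theorem \ref{numdim0case}), and it is exactly what powers the paper's argument (both in Lemma \ref{inductivestep} and, via \cite[Theorem 4.4]{l}, in Theorem \ref{goodminimalmodel}); one also needs the birational bookkeeping $\widetilde{\tau}$ rather than $\tau$, since the trivial reduction map is not a birational invariant of the pair. So your reading of the paper's contribution is right, but as a proof of Conjecture \ref{existenceofgmm} the proposal has a genuine gap at the very first branch of the induction.
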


%We present an inductive approach to the existence of good minimal models using reduction maps and thus relate Conjecture \ref{existenceofgmm} to properties of curves on $X$.

An important principle from \cite{kawamata_pluri} is that Conjecture \ref{existenceofgmm} can be naturally interpreted using numerical properties of $K_{X} + \Delta$.  Recall that the numerical dimension $\nu(D)$ of a pseudo-effective divisor $D$ as defined by \cite{N} and \cite{bdpp} is a numerical measure of  the ``positivity'' of $D$ (see Definition \ref{numerical Kodaira dimension}).   Conjecture \ref{existenceofgmm} is known in the two extremal cases: when $\nu(K_{X} + \Delta) = \dim X$ by \cite{BCHM} and when $\nu(K_{X} + \Delta) = 0$ by \cite{N} and \cite{d} (cf. \cite{g3}).  Furthermore, recent results of \cite{l} show that the existence of a good minimal model is equivalent to the equality $\kappa(K_{X} + \Delta) = \nu(K_{X} + \Delta)$.  

From this viewpoint, it is very natural to focus on morphisms $f: X \to Z$ for which $K_{X} + \Delta$ has good numerical behavior along the fibers.  Our main theorem shows that the existence of a good minimal model can be detected on (birational models of) the base of such maps.

%\begin{defi}
%A pseudo-effective divisor $D$ is abundant if $\kappa(D) = \nu(D)$.
%\end{defi}

%Results of \cite{l} show that $(X,\Delta)$ has a good minimal model if and only if $K_{X} + \Delta$ is abundant.
%\begin{conj}[Strong abundance] \label{strongabundance}
%Let $(X,\Delta)$ be a kawamata log terminal pair such that $K_{X} + \Delta$ is pseudo-effective.  Then $\kappa(K_{X} + \Delta) = \nu(K_{X} + \Delta)$.
%\end{conj}

%It turns out that the equality of the Iitaka and numerical dimensions is an important geometric property.  In recognition of this conjecture, a divisor $D$ is said to be abundant if $\kappa(D) = \nu(D)$.

\begin{thm}[(={Corollary \ref{main result}})]\label{main thm}
Let $(X,\Delta)$ be a $\mathbb{Q}$-factorial projective kawamata log terminal pair. Suppose that $f: X \to Z$ is a morphism with connected fibers to a normal projective variety $Z$ such that for a general fiber $F$ of $f$ we have $\nu((K_{X} + \Delta)|_{F}) = 0$. Then there exist a smooth projective birational model $Z'$ of $Z$ and a kawamata log terminal pair $(Z',\Delta_{Z'})$ such that $(X,\Delta)$ has a good minimal model if and only if $(Z',\Delta_{Z'})$ has a good minimal model.
\end{thm}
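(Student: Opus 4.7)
The plan is to use the canonical bundle formula to descend the good-minimal-model problem from $(X,\Delta)$ to a pair on a birational model $Z'$ of the base $Z$. The translation rests on first reducing to a relative situation where $K_X+\Delta$ is $\mathbb{Q}$-linearly trivial along the fibers of $f$, and then combining the resulting pullback relation with the characterization of good minimal models in terms of the equality $\kappa = \nu$ obtained in \cite{l}.

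The first step is to run the relative $(K_X+\Delta)$-MMP over $Z$. Because a general fiber $F$ satisfies $\nu((K_X+\Delta)|_F) = 0$, the numerical-dimension-zero case of Conjecture \ref{existenceofgmm} established in \cite{N} and \cite{d} (cf.\ \cite{g3}) applies fiberwise, and one expects it to yield a relative good minimal model. The outcome is a birational contraction $\varphi: X \dashrightarrow X'$ over $Z$, composed of $(K_X+\Delta)$-flips and divisorial contractions, with $(X',\Delta' := \varphi_* \Delta)$ klt and an induced morphism $f': X' \to Z$ such that $K_{X'}+\Delta' \sim_{\mathbb{Q},\,f'} 0$.

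Next, I would apply the canonical bundle formula to $f'$: there exist a discriminant $\mathbb{Q}$-divisor $B_Z$ and a b-nef moduli b-divisor $\mathbf{M}$ on $Z$ with $K_{X'}+\Delta' \sim_{\mathbb{Q}} (f')^*(K_Z+B_Z+M_Z)$, and the pair $(Z,B_Z)$ is sub-klt. Passing to a smooth birational model $\mu:Z'\to Z$ high enough that $\mathbf{M}$ descends to a nef $\mathbb{Q}$-Cartier divisor $M_{Z'}$, and defining $B_{Z'}$ by $K_{Z'}+B_{Z'} = \mu^*(K_Z+B_Z)$, I take a common resolution $\widetilde{f}: \widetilde{X} \to Z'$ of $X$ and $Z'$ to obtain a pullback relation of the form $K_{\widetilde{X}} + \widetilde{\Delta} \sim_{\mathbb{Q}} \widetilde{f}^{\,*}(K_{Z'}+B_{Z'}+M_{Z'})$ linking the two sides.

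The main obstacle is to promote the nef class $M_{Z'}$ to an effective $\mathbb{Q}$-divisor so that $\Delta_{Z'} := B_{Z'} + M_{Z'}'$ (with $M_{Z'}' \sim_{\mathbb{Q}} M_{Z'}$ effective) is a genuine klt boundary --- in complete generality this is precisely the unresolved b-semi-ampleness conjecture for moduli parts. The plan to sidestep it is to exploit \cite{l}: since the existence of a good minimal model is equivalent to $\kappa = \nu$, and both invariants are preserved by the pullback relation above, it is enough to construct \emph{any} klt pair $(Z',\Delta_{Z'})$ with $K_{Z'}+\Delta_{Z'}$ in the same $\mathbb{Q}$-linear equivalence class as $K_{Z'}+B_{Z'}+M_{Z'}$. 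Such a $\Delta_{Z'}$ can be produced by a perturbation argument using that $M_{Z'}$ is nef, combined with the sub-klt property of $(Z',B_{Z'})$ to preserve the klt condition after absorbing negative contributions from $B_{Z'}$. Once this pair is in hand, the ``if and only if'' in the theorem follows by applying \cite{l} on both sides.
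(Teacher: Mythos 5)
Your overall architecture is the same as the paper's (reduce by a relative MMP over $Z$ to a fiberwise $\mathbb{Q}$-trivial situation, apply the canonical bundle formula, then conclude via the abundance/$\kappa=\nu$ characterization of good minimal models from \cite{l}, i.e.\ Theorem \ref{goodminimalmodel}), but the decisive step is exactly where your argument has a genuine gap: the production of a klt pair $(Z',\Delta_{Z'})$ with $K_{Z'}+\Delta_{Z'}\sim_{\mathbb{Q}}K_{Z'}+B_{Z'}+M_{Z'}$. You propose to obtain it ``by a perturbation argument using that $M_{Z'}$ is nef.'' Nefness gives no effectivity whatsoever: a nef class need not be $\mathbb{Q}$-linearly equivalent to any effective divisor (already a non-torsion degree-zero class on an elliptic curve fails), so no perturbation of the sub-klt $B_{Z'}$ by the nef class $M_{Z'}$ can manufacture the required boundary, and Lai's criterion cannot sidestep this because the existence of \emph{some} klt pair in the right $\mathbb{Q}$-linear equivalence class on a model of $Z$ is precisely the hard content. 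The paper gets this from Ambro's theorem on lc-trivial fibrations (\cite[Theorem 3.3]{ambro}), applied to $f\colon (X,\Delta-\tfrac{1}{m}\Omega)\to Z$ after twisting by $\Omega\in |m(K_X+\Delta)+f^*A|$ to make the fibration genuinely lc-trivial over $Z$; Ambro's result rests on the Hodge-theoretic positivity of the moduli b-divisor (b-nef \emph{and good} for klt-trivial fibrations), which is strictly stronger than b-nefness, and is what replaces the full b-semi-ampleness conjecture here --- it is an available theorem, not a formal perturbation.

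Two secondary points. First, the clean relation $K_{\widetilde X}+\widetilde\Delta\sim_{\mathbb{Q}}\widetilde f^{\,*}(K_{Z'}+B_{Z'}+M_{Z'})$ on a common resolution is not available on the nose: passing from $(X,\Delta)$ through the relative MMP and to a log smooth model introduces exceptional and $f'$-degenerate correction terms (the divisors $E$, $B^{-}$, $B^{+}_h$, $B^{+}_v$ of the Fujino--Mori formula \cite[4.4]{fm}), and showing that neither $\kappa$ nor $\nu$ is affected is exactly the content of Lemma \ref{inductivestep}, proved via Nakayama's $\sigma$-decomposition (Lemmas \ref{pro-sigma-decomposition} and \ref{negative_part_degenerate_lem}) to get $P_{\sigma}(\mu^{*}(K_X+\Delta))\sim_{\mathbb{Q}}P_{\sigma}(f'^{*}(K_{Z'}+\Delta_{Z'}))$; your ``both invariants are preserved by the pullback relation'' needs this bookkeeping. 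Second, the cited $\nu=0$ results are absolute statements about the fiber and do not by themselves give a relative good minimal model over $Z$; the paper instead uses \cite[Theorem 2.3]{fujino-ss} only to reach a model where $\mathbf{B}_{-}$ of the restriction to a general fiber has no divisorial components, deduces $(K+\Delta)|_F\sim_{\mathbb{Q}}0$, and then achieves relative triviality by the $f^*A$-twist rather than by relative semi-ampleness.
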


We use the techniques of \cite{ambro_nef} which proves the special case of Theorem \ref{main thm} when $K_{X} + \Delta$ is nef.  Our theorem has the important advantage that it can be applied to deduce the existence of a minimal model (rather than requiring that $K_{X } + \Delta$ be nef in the first place).  Some low dimension cases were worked out in \cite{bdpp}; related results appear in \cite{fukuda} and independently in the recent preprint \cite{siu}.

In order to apply Theorem \ref{main thm} in practice, the key question is whether, perhaps after a birational modification, one can find a map such that the numerical dimension of $(K_{X} + \Delta)|_{F}$ vanishes for a general fiber $F$.  The $(K_{X}+\Delta)$-trivial reduction map constructed in \cite{leh} satisfies precisely this property.  We develop a birational version of this theory better suited for working with the minimal model program. %; we then define $\widetilde{\tau}(X, \Delta)$ to be the dimension of the image of this birational version.  Thus we obtain the following:

%\begin{thm}[(Corollary of Theorem \ref{mm})]\label{gon_main}
%Let $(X,\Delta)$ be a kawamata log terminal pair. If $0 \leq \widetilde{\tau}(K_{X} + \Delta) \leq 3$ then $(X,\Delta)$ has a good minimal model.
%\end{thm}

Finally, we reinterpret the existence of good minimal models as a statement about curves.  Recall that an irreducible curve $C$ is said to be movable if it is a member of a family of curves dominating $X$.   Movable curves are used to construct the $(K_{X} + \Delta)$-reduction map and are thus related to the existence of good minimal models by Theorem \ref{main thm}.   Conjecture \ref{existenceofgmm} implies the following well-known prediction:

\begin{conj} \label{vanishingintersectionconj}
Let $(X,\Delta)$ be a $\mathbb{Q}$-factorial projective kawamata log terminal pair.  Suppose that $(K_{X} + \Delta).C > 0$ for every movable curve $C$ on $X$.  Then $K_{X} + \Delta$ is big.
\end{conj}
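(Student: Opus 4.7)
The plan is to derive the conjecture from Conjecture \ref{existenceofgmm} by contrapositive: assume $K_X+\Delta$ is not big, and produce a movable curve $C$ with $(K_X+\Delta).C\leq 0$. A first easy reduction disposes of the case when $K_X+\Delta$ fails to be pseudo-effective: by the Boucksom--Demailly--P\u{a}un--Peternell duality between the pseudo-effective cone of divisors and the closure of the cone of movable curves, the failure of pseudo-effectivity already yields a strongly movable curve $C$ with $(K_X+\Delta).C<0$. Thus I may reduce to the case where $K_X+\Delta$ is pseudo-effective but not big.

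In this range, invoking Conjecture \ref{existenceofgmm} produces a good minimal model: a sequence of $(K_X+\Delta)$-flips and divisorial contractions $\phi:X\dashrightarrow X'$, together with the semi-ample fibration $\pi:X'\to Z$ associated with $K_{X'}+\Delta'\sim_{\mathbb{Q}}\pi^*A$ for some ample $A$ on $Z$. Because $K_X+\Delta$ is not big, $\dim Z<\dim X$, and the general fiber of $\pi$ is positive-dimensional. On $X'$ I can then choose a strongly movable covering family of curves sweeping out the fibers of $\pi$ (for instance, by taking complete intersections of general very ample divisors on a general fiber). A general member $C'$ of this family is $\pi$-contracted, so $(K_{X'}+\Delta').C'=\pi^*A.C'=0$, and the family dominates $X'$.

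The remaining and most delicate step is to transfer $C'$ back to a movable curve on $X$ whose intersection with $K_X+\Delta$ still vanishes. I would fix a common resolution $p:W\to X$, $q:W\to X'$. Since each step of $\phi$ is $(K_X+\Delta)$-negative, the negativity lemma gives
\[
p^*(K_X+\Delta)=q^*(K_{X'}+\Delta')+E
\]
with $E$ effective and supported on the exceptional locus of $p$ or $q$. Let $\widetilde C\subset W$ denote the strict transform of $C'$ and set $C=p_*\widetilde C$. The image of the $q$-exceptional locus in $X'$ and the image of the $p$-exceptional locus in $X$ both have codimension $\geq 2$, so a suitably general $C'$ in the covering family avoids these loci; this forces $\widetilde C$ to be disjoint from $\mathrm{Supp}(E)$ and $C$ to be a genuine curve on $X$. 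Then
\[
(K_X+\Delta).C=q^*(K_{X'}+\Delta').\widetilde C+E.\widetilde C=(K_{X'}+\Delta').C'=0,
\]
and the covering family for $C'$ restricts, via the isomorphism locus of $\phi^{-1}$, to a covering family for $C$, so $C$ is movable on $X$. This contradicts the hypothesis and completes the argument. The main obstacle is precisely this last transfer: one has to guarantee simultaneously that the strict transform avoids the support of $E$, that $p$ does not collapse it, and that the covering property survives, which requires a careful genericity choice of $C'$ inside its family.
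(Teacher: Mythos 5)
Your argument is correct, with the important caveat that the statement under review is an open conjecture: what you actually prove is the implication ``Conjecture \ref{existenceofgmm} implies Conjecture \ref{vanishingintersectionconj}'', which is precisely the converse direction of the paper's Corollary \ref{equivalenceofconjectures}; no unconditional proof is possible, and the paper itself only establishes this equivalence. Your route to that implication is, however, genuinely different in its mechanics. The paper encodes the link between $(K_X+\Delta)$-trivial movable curves and non-bigness through the reduction-map formalism: the invariant $\widetilde{\tau}$, its invariance under flips and divisorial contractions (Proposition \ref{preserve}), and Corollary \ref{tauforgmm}, which gives $\widetilde{\tau}(X,\Delta)=\tau(K_X+\Delta)=\kappa(K_X+\Delta)$ once a good minimal model exists. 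You instead argue by contrapositive directly: the non-pseudo-effective case is settled by the duality of \cite{bdpp}, and in the pseudo-effective non-big case you pass to the good minimal model furnished by Conjecture \ref{existenceofgmm}, take complete-intersection curves in the positive-dimensional fibers of the semiample fibration, and transport them back to $X$ through a common resolution using the negativity lemma ($p^*(K_X+\Delta)=q^*(K_{X'}+\Delta')+E$ with $E\geq 0$ and $q$-exceptional) together with the fact that $q(\Supp E)$ and the non-isomorphism locus of $\varphi^{-1}$ have codimension at least $2$, so a general fiber curve avoids them, the projection formula gives $(K_X+\Delta).C=0$, and movability survives. This curve construction is essentially the one hidden inside the paper's proof of Corollary \ref{tauforgmm} (intersecting a general fiber with general very ample divisors while avoiding the image of the exceptional locus), but you bypass the $\tau$/$\widetilde{\tau}$ machinery entirely. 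What you gain is a shorter, self-contained derivation of this one implication, with the genericity issues correctly identified and disposable by standard facts; what the paper's heavier formalism buys is exactly what you do not address, namely the other direction of the equivalence and the inductive statement of Theorem \ref{mm}, for which the invariance of $\widetilde{\tau}$ under steps of the minimal model program is the essential tool.
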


We show that the two conjectures are equivalent: 

\begin{thm}[(=Corollary \ref{equivalenceofconjectures})]\label{leh_main}
Conjecture \ref{vanishingintersectionconj} holds up to dimension $n$ if and only if
Conjecture \ref{existenceofgmm} holds up to dimension $n$.
\end{thm}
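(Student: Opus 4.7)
The plan is to prove the two implications separately, using the $(K_X+\Delta)$-trivial reduction map of \cite{leh} in conjunction with Theorem \ref{main thm} as the main device.

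\textbf{Forward direction.} Assume Conjecture \ref{existenceofgmm} holds up to dimension $n$, and let $(X,\Delta)$ satisfy the hypothesis of Conjecture \ref{vanishingintersectionconj} with $\dim X \leq n$. By BDPP duality (the pseudo-effective cone is dual to the cone of movable curves), $K_X+\Delta$ is pseudo-effective, so by assumption $(X,\Delta)$ admits a good minimal model $\varphi\colon X \dashrightarrow X'$ with $K_{X'}+\varphi_*\Delta$ semi-ample. If this divisor were not big, its semi-ample fibration $X' \to W$ would have positive-dimensional fibers swept out by movable curves $C'$ of $(K_{X'}+\varphi_*\Delta)$-degree zero. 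Taking a common resolution $p\colon Y\to X$, $q\colon Y\to X'$ and writing $p^*(K_X+\Delta)=q^*(K_{X'}+\varphi_*\Delta)+E$ with $E\geq 0$ effective and $q$-exceptional (negativity lemma), I would lift a general such $C'$ to a curve $\tilde{C}$ on $Y$ avoiding the codimension $\geq 2$ locus $q(E)$; then $p_*\tilde{C}$ is a movable curve on $X$ with $(K_X+\Delta).p_*\tilde{C}=0$, contradicting the hypothesis and forcing $K_X+\Delta$ to be big.

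\textbf{Reverse direction.} Assume Conjecture \ref{vanishingintersectionconj} holds up to dimension $n$, and induct on $\dim X$; the case $\dim X=0$ is trivial. Given $(X,\Delta)$ klt of dimension $n$ with $K_X+\Delta$ pseudo-effective, resolve the $(K_X+\Delta)$-trivial reduction map of \cite{leh} to a morphism $f\colon X \to Z$ with $\nu((K_X+\Delta)|_F)=0$ on a general fiber. Theorem \ref{main thm} then supplies a klt pair $(Z',\Delta_{Z'})$ on a smooth birational model of $Z$ whose good minimal model status coincides with that of $(X,\Delta)$. If $\dim Z'<n$, the inductive hypothesis applied to $(Z',\Delta_{Z'})$ yields a good minimal model, and hence one for $(X,\Delta)$. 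If $\dim Z'=n$, then $f$ is birational and the reduction map is trivial; by its defining property, no movable curve on $X$ has vanishing $(K_X+\Delta)$-degree, so pseudo-effectivity upgrades via BDPP to strict positivity on every movable curve. Conjecture \ref{vanishingintersectionconj} then gives bigness of $K_X+\Delta$, and \cite{BCHM} produces the required good minimal model.

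\textbf{Expected obstacle.} The subtlest step is closing the induction in the reverse direction: to apply the inductive hypothesis to $(Z',\Delta_{Z'})$ one needs $K_{Z'}+\Delta_{Z'}$ itself to be pseudo-effective, which should follow from the canonical bundle formula underlying the construction of $\Delta_{Z'}$ for a fibration with $\nu=0$ fibers, but requires verification. Correspondingly, the identification of ``$f$ birational'' with ``no $(K_X+\Delta)$-trivial movable curves on $X$'' must be traced through the construction of the reduction map in \cite{leh}. Neither issue is fundamentally deep, but both are essential to making the dichotomy cleanly deliver the conclusion.
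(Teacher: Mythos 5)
Your overall architecture is the paper's: a dichotomy governed by the $(K_X+\Delta)$-trivial reduction map, induction on dimension through Theorem \ref{main thm}, and in the remaining case an appeal to Conjecture \ref{vanishingintersectionconj} plus \cite{BCHM}. Your forward direction is essentially the paper's Corollary \ref{tauforgmm}: lifting $(K_{X'}+\varphi_*\Delta)$-trivial complete-intersection curves in fibers of the semiample fibration back to $X$, avoiding the codimension $\geq 2$ image of the exceptional locus; that part is fine.

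The genuine gap is in your reverse direction, and it is exactly the issue the paper's Section \ref{section3} exists to handle. The $(K_X+\Delta)$-trivial reduction map is not a morphism on $X$; it only becomes one on a birational model $\varphi\colon W\to X$, and what Theorem \ref{reduction} gives there is $\nu(\varphi^*(K_X+\Delta)|_F)=0$ on general fibers. To invoke Theorem \ref{main thm} (equivalently Corollary \ref{main result}) you need a \emph{klt pair} together with a morphism such that the restriction of its own log canonical divisor to general fibers has $\nu=0$. On $W$ the relevant pair is an ($\epsilon$-)log smooth model, with $K_W+\Delta^{\epsilon}_W=\varphi^*(K_X+\Delta)+E'$ for an effective exceptional $E'$; the restriction $E'|_F$ need not be exceptional for $\varphi|_F$ and can raise the numerical dimension of $(K_W+\Delta^{\epsilon}_W)|_F$ above $0$, so the hypothesis of Theorem \ref{main thm} is not verified for the fibration you chose -- indeed the reduction map itself may change when passing to a log smooth model. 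The paper closes this by maximizing over $\epsilon$-log smooth models ($\widetilde{\tau}(X,\Delta)$, Remark \ref{tilde_tauind}, Lemma \ref{tildetau_lem}), running the dichotomy on $\widetilde{\tau}$ rather than on $\tau(K_X+\Delta)$, applying Corollary \ref{main result} to $(W,\Delta^{\epsilon}_W)$ via Theorem \ref{mm}, transferring the good minimal model back to $(X,\Delta)$ by \cite[Lemma 3.6.10]{BCHM}, and, in the case $\widetilde{\tau}(X,\Delta)=n$, applying Conjecture \ref{vanishingintersectionconj} to the $\epsilon$-log smooth model rather than to $X$. Your version leaves uncovered precisely the regime $\tau(K_X+\Delta)<n$ but $\widetilde{\tau}(X,\Delta)=n$ (not known to be empty), where neither of your two horns applies as written. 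By contrast, the obstacle you did flag -- pseudo-effectivity of $K_{Z'}+\Delta_{Z'}$ -- is comparatively harmless: it is absorbed by Lemma \ref{inductivestep}, which gives $P_{\sigma}(\mu^*(K_X+\Delta))\sim_{\mathbb{Q}}P_{\sigma}(f'^*(K_{Z'}+\Delta_{Z'}))$, so the real missing ingredient is the birational $\widetilde{\tau}$-machinery, not the canonical bundle formula.
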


The paper is organized as follows.  Section \ref{section2} is devoted to preliminary definitions and results. Section \ref{section3} develops the theory of the $D$-trivial reduction map to allow for applications to the minimal model program.  In Section \ref{section4}, we first discuss the relationship between abundance and the existence of good minimal models.  We then prove Theorem \ref{main thm} and Theorem \ref{leh_main}.

\begin{ack}The first author wishes to express his deep gratitude to his supervisor Professor Hiromichi Takagi for various comments and many suggestions. He would like to thank Professor Osamu Fujino and Doctor Ching-Jui Lai for answering his questions. He is partially supported by the Research Fellowships of the Japan Society for the Promotion of Science for Young Scientists. The second author would like to thank his advisor Professor James $\mathrm{M^{c}}$Kernan for his advice and support. He is supported under a National Science Foundation Graduate Research Fellowship.  The authors also thank the referee for the detailed suggestions.
\end{ack}

\section {Preliminaries}\label{section2}

In this section, we introduce definitions and collect some lemmas for the proof of main results.  

\begin{conv}
Throughout this paper we work over $\mathbb{C}$.  %The term $\mathbb{R}$-Cartier divisor refers to an $\mathbb{R}$-Cartier $\mathbb{R}$-Weil divisor.
\end{conv}

\subsection{Log pairs}

We start by discussing log pairs and their resolutions.

\begin{defi}
A log pair $(X,\Delta)$ consists of a normal variety $X$ and an effective $\mathbb Q$-Weil divisor $\Delta$ such that $K_X+\Delta$ is $\mathbb Q$-Cartier.  We say that $(X, \Delta)$ is {\em{kawamata log terminal}} %(sometimes abbreviated by {\em{klt}}) 
if the discrepancy $a(E, X, \Delta) > -1$ for every prime divisor $E$ over $X$.
\end{defi}

\begin{defi}\label{log smooth}Let $(X,\Delta)$ be a kawamata log terminal pair and $\varphi:W \to X$ a log resolution of $(X,\Delta)$.  Choose $\Delta_{W}$ so that $$K_{W} + \Delta_{W} = \varphi^{*}(K_{X} + \Delta) + E$$ where $\Delta_{W}$ and $E$ are effective $\mathbb{Q}$-Weil divisors that have no common component.
We call $(W, \Delta_{W})$ a log smooth model of $(X,\Delta)$.  

Note that a minimal model of $(W,\Delta_{W})$ may not be a minimal model of $(X,\Delta)$.  To compensate for this deficiency, define for any $\epsilon > 0$
$$F=\sum_{F_i \text{ a }\varphi\text{-exceptional prime divisor}}F_i\ \text{and}\ \Delta_{W}^{\epsilon}=\Delta_{W}+\epsilon F.
$$
We call $(W, \Delta_{W}^{\epsilon})$ an $\epsilon$-{\em log smooth model}.  

\end{defi}

\begin{rem}\label{rem_log smooth} Note that our definition of a log smooth model differs from that of Birkar and Shokurov (cf. \cite{b2}).  Under our definition, for sufficiently small $\epsilon$ a good minimal model for $(W, \Delta_{W}^{\epsilon})$ is also a good minimal model for $(X,\Delta)$ (\cite[Lemma 3.6.10]{BCHM}).

\end{rem}

\subsection{$\mathbb{R}$-Cartier divisors}

We next turn to the birational theory of pseudo-effective $\mathbb{R}$-Cartier divisors.  

Suppose that $D=\sum _{j=1}^r d_j D_j$ is an $\mathbb{R}$-Weil divisor such that 
$D_j$ is a prime divisor for every $j$ and $D_i\ne D_j$ for $i\ne j$.  We define the %{\em{round-up}} $\ulcorner D\urcorner =\sum _{j=1}^{r} \ulcorner d_j\urcorner D_j$ (resp.~the
{\em{round-down}} $\llcorner D\lrcorner =\sum _{j=1}^{r} \llcorner d_j \lrcorner D_j$,%)
where for every real number $x$, %$\ulcorner x\urcorner$ (resp.~
$\llcorner x\lrcorner$ %)
is the integer defined by %$x\leq \ulcorner x\urcorner <x+1$ (resp.~
$x-1<\llcorner x\lrcorner \leq x$.  

\begin{defi}
For an $\mathbb{R}$-Cartier divisor $D$ on a normal projective variety $X$, the Iitaka dimension is
$$\kappa(D)=\mathrm{max}\{k \in \mathbb{Z}_{\geq 0}| \limsup_{m \to \infty}m^{-k} \mathrm{dim} \, H^0(X, \llcorner mD \lrcorner) >0  \}
$$ 
if $H^0(X, \llcorner mD \lrcorner) \not= 0$ for infinitely many $m \in \mathbb{N}$ or $\kappa(D)=-\infty$ otherwise.
\end{defi}

\begin{defi}[cf. \cite{elmnp}]\label{base locus}
Let $X$ be a normal projective variety and $D$ be an $\mathbb{R}$-Cartier divisor on $X$.  Fix an ample divisor $A$.  Define
$$\mathbf{B}(D)=\bigcap_{D \sim_{\mathbb{R}} E, E \geq 0} \mathrm{Supp}\,E,\ \text{and}\ \mathbf{B}_{-}(D)=\bigcup_{\epsilon \in \mathbb{R}_{>0}} \mathbf{B}(D+ \epsilon A).$$
As suggested by the notation $\mathbf{B}_{-}(D)$ is independent of the choice of $A$.
\end{defi}

As mentioned in the introduction, numerical properties of divisors play an important role this paper.  The essential technical tools we need were first introduced in \cite{N}: the $\sigma$-decomposition and the numerical dimension.

\begin{defi} Let $X$ be a smooth projective variety and $D$ be a pseudo-effective $\mathbb{R}$-Cartier divisor on $X$.  Fix an ample divisor $A$ on $X$.  Given a prime divisor $\Gamma$ on $X$, define
\begin{equation*}
\sigma_{\Gamma}(D) = \min \{\mult_{\Gamma}(D') | D' \geq 0 \textrm{ and } D' \sim_{\mathbb{Q}} D+\epsilon A \textrm{ for some } \epsilon>0 \}.
\end{equation*}
This definition is independent of the choice of $A$.

\end{defi}

It is shown in \cite{N} that for any pseudo-effective divisor $D$ there are only finitely many prime divisors $\Gamma$ such that $\sigma_{\Gamma}(D) > 0$.  Thus, the following definition make sense:

\begin{defi} Let $X$ be a smooth projective variety and $D$ be a pseudo-effective $\mathbb{R}$-Cartier divisor on $X$.  Define the $\mathbb{R}$-Cartier divisors $N_{\sigma}(D) = \sum_{\Gamma} \sigma_{\Gamma}(D) \Gamma$ and $P_{\sigma}(D) = D - N_{\sigma}(D)$.  The decomposition $$D = P_{\sigma}(D) + N_{\sigma}(D)$$ is known as the $\sigma$-{\em decomposition}.  It is also known as the {\em sectional decomposition} (\cite{kawamata_crepant}), the {\em divisorial Zariski decomposition} (\cite{bouck_dzd}), and the {\em numerical Zariski decomposition} (\cite{k-ab}).

\end{defi}

The basic properties of the $\sigma$-decomposition are:

\begin{lem}[\cite{N}] \label{pro-sigma-decomposition}Let $X$ be a smooth projective variety and $D$ a pseudo-effective $\mathbb{R}$-Cartier divisor.  Then
\begin{itemize}
\item[(1)] $\kappa(D) = \kappa(P_{\sigma}(D))$,
\item[(2)] $\Supp(N_{\sigma}(D)) \subset \mathbf{B}_{-}(D)$,
\item[(3)] for any prime divisor $\Gamma$ on $X$, $P_{\sigma}(D)|_{\Gamma}$ is pseudo-effective, and
\item[(4)] if $0 \leq D' \leq N_{\sigma}(D)$, then $D-D'$ is pseudo-effective and $N_{\sigma}(D-D') = N_{\sigma}(D) - D'$.
\end{itemize}

\end{lem}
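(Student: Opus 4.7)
The plan is to handle the four claims in increasing order of subtlety, working throughout only from the infimum definition of $\sigma_{\Gamma}$ and the perturbation by the auxiliary ample $A$; the finiteness statement quoted just before the lemma guarantees that $N_{\sigma}(D)$ is an honest $\mathbb{R}$-Weil divisor, which is used implicitly throughout.

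I would begin with (2), which is essentially a tautology. If $\Gamma \subset \Supp(N_{\sigma}(D))$ then $\sigma_{\Gamma}(D) > 0$, so every effective $\mathbb{Q}$-divisor $D' \sim_{\mathbb{Q}} D + \epsilon A$ satisfies $\mult_{\Gamma}(D') \geq \sigma_{\Gamma}(D) > 0$. Hence $\Gamma \subset \mathbf{B}(D + \epsilon A)$ for every $\epsilon > 0$, and taking the union over $\epsilon$ yields $\Gamma \subset \mathbf{B}_{-}(D)$. For (4), the decomposition $D - D' = P_{\sigma}(D) + (N_{\sigma}(D) - D')$ exhibits $D-D'$ as a sum of a pseudo-effective class and an effective divisor, hence pseudo-effective. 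The identity $N_{\sigma}(D-D') = N_{\sigma}(D) - D'$ reduces to a two-sided estimate on $\sigma_{\Gamma}$ for each prime $\Gamma$: adding $D'$ to any effective representative of $(D-D') + \epsilon A$ produces an effective representative of $D + \epsilon A$ whose $\Gamma$-multiplicity has grown by exactly $\mult_{\Gamma}(D')$; conversely, for any effective $E \sim_{\mathbb{Q}} D + \epsilon A$ and every $\Gamma' \subset \Supp(D') \subset \Supp(N_{\sigma}(D))$ one has $\mult_{\Gamma'}(E) \geq \sigma_{\Gamma'}(D) \geq \mult_{\Gamma'}(D')$, so $E - D'$ is effective and delivers the reverse inequality after taking infima.

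For (1), the inequality $\lfloor m P_{\sigma}(D) \rfloor + \lfloor m N_{\sigma}(D) \rfloor \leq \lfloor m D \rfloor$ provides an injection $H^{0}(X, \lfloor m P_{\sigma}(D) \rfloor) \hookrightarrow H^{0}(X, \lfloor m D \rfloor)$ via multiplication by the section cutting out $\lfloor m N_{\sigma}(D) \rfloor$, giving $\kappa(P_{\sigma}(D)) \leq \kappa(D)$. For the reverse, any effective $E \sim \lfloor m D \rfloor$ gives, after dividing by $m$ and adding the fractional defect of $\tfrac{1}{m}\lfloor mD \rfloor$ together with $\epsilon A$, an effective representative of $D + \epsilon A$; the infimum definition then forces $\mult_{\Gamma}(E) \geq m\sigma_{\Gamma}(D) - O(1)$ with an error bounded independently of $m$, so $E$ vanishes along $\lfloor m N_{\sigma}(D) \rfloor$ up to a bounded correction and the residual factor lies in a linear series comparable to $|\lfloor m P_{\sigma}(D) \rfloor|$ up to additive error. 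Comparing the asymptotic growth of $h^{0}$ yields $\kappa(D) \leq \kappa(P_{\sigma}(D))$.

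The main obstacle is (3), because restricting a pseudo-effective class to a divisor in its own support is not a priori well-defined; one must commit to specific representatives and pass to a careful limit. The plan is: by the infimum definition, for every $\eta > 0$ there is an effective $\mathbb{Q}$-divisor $E_{\eta} \sim_{\mathbb{Q}} D + \eta A$ with $\mult_{\Gamma}(E_{\eta}) < \sigma_{\Gamma}(D) + \eta$. For every $\Gamma' \subset \Supp(N_{\sigma}(D))$ one has $\mult_{\Gamma'}(E_{\eta}) \geq \sigma_{\Gamma'}(D) = \mult_{\Gamma'}(N_{\sigma}(D))$, so $F_{\eta} := E_{\eta} - N_{\sigma}(D) \geq 0$, with $F_{\eta} \sim_{\mathbb{Q}} P_{\sigma}(D) + \eta A$ and $\mult_{\Gamma}(F_{\eta}) < \eta$. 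Subtracting off this small $\Gamma$-component produces an effective representative of $P_{\sigma}(D) + \eta A - \eta' \Gamma$ for some $0 \leq \eta' < \eta$ whose support no longer contains $\Gamma$, and its restriction to $\Gamma$ is then an honest effective divisor. Letting $\eta \to 0$ and invoking closedness of the pseudo-effective cone in $N^{1}(\Gamma)_{\mathbb{R}}$ then gives that $P_{\sigma}(D)|_{\Gamma}$ is pseudo-effective.
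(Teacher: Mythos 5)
Your handling of (2), (3) and (4) is essentially the standard direct argument (the paper itself simply quotes [N, III.1.8] and [N, III.1.14] for these), and it goes through, with one caveat worth flagging: in (3) and (4) you repeatedly use that \emph{every} effective $E\sim_{\mathbb{Q}}D+\epsilon A$ satisfies $\mult_{\Gamma'}(E)\geq\sigma_{\Gamma'}(D)$, hence that $E-N_{\sigma}(D)$ (resp. $E-D'$) is effective. This is literally true only if one reads the displayed definition of $\sigma_{\Gamma}$ as a minimum over all $\epsilon>0$ at once; in Nakayama's actual setup $\sigma_{\Gamma}(D)$ is the limit of $\sigma_{\Gamma}(D+\epsilon A)$ as $\epsilon\to 0$, so for fixed $\epsilon$ one only gets $\mult_{\Gamma'}(E)\geq\sigma_{\Gamma'}(D+\epsilon A)$, and you need a short continuity-in-$\epsilon$ step (subtract the coefficientwise minimum and pass to the limit) before subtracting the full $N_{\sigma}(D)$. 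That is routine and does not affect the structure of your argument, nor does the closed-cone limit you use for (3), which is fine.

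The genuine gap is in the second half of (1). From $\mult_{\Gamma}(E)\geq m\sigma_{\Gamma}(D)-O(1)$ you only obtain an injection $H^{0}(X,\lfloor mD\rfloor)\hookrightarrow H^{0}(X,\lfloor mP_{\sigma}(D)\rfloor+F)$ for a fixed effective divisor $F$, and ``comparing asymptotic growth of $h^{0}$'' from there does \emph{not} give $\kappa(D)\leq\kappa(P_{\sigma}(D))$: adding a fixed effective divisor can raise the Iitaka dimension. For example, on a product of elliptic curves $E_{1}\times E_{2}$ take $P=\pi_{1}^{*}L$ with $L$ of degree $0$ and non-torsion; $P$ is nef, hence equal to its own positive part, $\kappa(P)=-\infty$, yet $\kappa(P+\pi_{1}^{*}(\mathrm{pt}))=1$. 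So a bounded additive error is exactly what one cannot afford here, and this is precisely why the paper cites the proof of Boucksom's Theorem 5.5, which asserts that the inclusion $i_{m}$ is an \emph{isomorphism}, i.e.\ the error is zero. Your own computation can be tightened to recover this: carrying the fractional parts, the estimate is $\mult_{\Gamma}(E)\geq m\sigma_{\Gamma}(D)-\bigl(m\,\mult_{\Gamma}D-\lfloor m\,\mult_{\Gamma}D\rfloor\bigr)=\lfloor m\,\mult_{\Gamma}D\rfloor-m\,\mult_{\Gamma}P_{\sigma}(D)$, and since $\mult_{\Gamma}(E)$ is an integer this forces $\mult_{\Gamma}(E)\geq\lfloor m\,\mult_{\Gamma}D\rfloor-\lfloor m\,\mult_{\Gamma}P_{\sigma}(D)\rfloor$. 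Hence every member of $|\lfloor mD\rfloor|$ contains $\lfloor mD\rfloor-\lfloor mP_{\sigma}(D)\rfloor$, the injection lands exactly in $H^{0}(X,\lfloor mP_{\sigma}(D)\rfloor)$ with no correction divisor, and (1) follows. As written, though, the ``up to additive error'' step is a real hole, not a harmless simplification.
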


\begin{proof} The proof of \cite[Theorem 5.5]{bouck_dzd} shows that the inclusion
$$i_{m}: H^{0}(X,\mathcal{O}_{X}(\lfloor mP_{\sigma}(D) \rfloor)) \to H^{0}(X,\mathcal{O}_{X}(\lfloor mD \rfloor))$$
is an isomorphism for any positive integer $m$.  This implies (1).
(2) is immediate from the definition. We see (4) by \cite[III, 1.8 Lemma]{N}. (3) follows from (4) and \cite[III, 1.14 Proposition (1)]{N}.
\end{proof}

Closely related to the $\sigma$-decomposition is the numerical dimension, a numerical measure of the positivity of a divisor.

\begin{defi}\label{numerical Kodaira dimension} Let $X$ be a normal projective variety, $D$ an $\mathbb{R}$-Cartier divisor and $A$ an ample divisor on $X$.  Set
$$\nu(D,A)=\mathrm{max}\{k \in \mathbb{Z}_{\geq 0}| \limsup_{m \to \infty}m^{-k} \mathrm{dim} \, H^0(X, \llcorner mD \lrcorner+A) >0  \}
$$ 
 if  $H^0(X, \llcorner mD \lrcorner+A) \not= 0$ for infinitely many $m \in \mathbb{N}$ or $\sigma(D,A)=- \infty$ otherwise.  Define
$$\nu(D) = \max_{A \textrm{ ample}} \nu(D,A).$$
\end{defi}

\begin{rem}\label{nakayama_rem} By the results of \cite{leh2}, this definition coincides with the notions of $\kappa_{\nu}(D)$ from \cite[V, 2.20, Definition]{N} and $\nu(D)$ from \cite[3.6, Definition]{bdpp}.

\end{rem}

\begin{lem}[{\cite[V, 2.7 Proposition]{N}, \cite[Theorem 6.7]{leh2}}] \label{pro-nu-dimension}Let $X$ be a normal projective variety, $D$ be an $\mathbb{R}$-Cartier divisor on $X$, and $\varphi: W \to X$ be a birational map from a normal projective variety $W$.   Then:
\begin{itemize}
\item[(1)] $\nu(D) = \nu(D')$ for any $\mathbb{R}$-Cartier divisor $D'$ such that $D' \equiv D$, 
\item[(2)] $\nu(\varphi^*D)= \nu(D)$,
\item[(3)] $\nu(D) \geq \kappa(D)$, and
\item[(4)] if $X$ is smooth then $\nu(D)= \nu(P_{\sigma}(D))$.
\end{itemize}
Furthermore, if $X$ is smooth then $\nu(D) = 0$ if and only if $P_{\sigma}(D) \equiv 0$.
\end{lem}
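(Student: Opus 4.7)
The four items of the lemma are separate facts about $\nu(D)$; I would first handle the three that follow directly from the definition using sections, namely (2) and (3), then tackle the numerical invariance (1) via the intersection-theoretic characterization recalled in Remark \ref{nakayama_rem}, and finally deduce (4) and the ``furthermore'' clause from the $\sigma$-decomposition isomorphism appearing in the proof of Lemma \ref{pro-sigma-decomposition}(1). Throughout it is convenient to first observe that if $A$ and $A'$ are ample with $A' - A$ effective, then $\nu(D,A) \leq \nu(D,A')$, so in the supremum defining $\nu(D)$ we may freely enlarge the auxiliary ample divisor.

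\textbf{Items (3) and (2).} For (3), multiplication by a nonzero section of $\mathcal{O}_X(A)$ gives an injection
\[
H^0(X, \lfloor mD \rfloor) \hookrightarrow H^0(X, \lfloor mD \rfloor + A),
\]
so directly from the definitions $\kappa(D) \le \nu(D,A) \le \nu(D)$. For (2), fix an ample $A$ on $X$ and take an ample $A_W$ on $W$ such that $A_W - \varphi^*A$ is effective. The equality $H^0(W,\lfloor m\varphi^*D \rfloor + \varphi^*A) = H^0(X,\lfloor mD \rfloor + A)$ (since $\varphi$ is birational and both sides compute sections on an open dense subset) plus the inequality above gives $\nu(\varphi^*D) \ge \nu(D)$. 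The reverse inequality uses that $\varphi_* \lfloor m\varphi^*D \rfloor \supseteq \lfloor mD \rfloor$ together with Leray.

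\textbf{Item (1).} This is the subtle point, because the definition involves $\lfloor mD \rfloor$, which is not a numerical invariant. I would invoke Remark \ref{nakayama_rem}: by \cite{leh2}, $\nu(D)$ agrees with the intersection-theoretic quantity of \cite{bdpp},
\[
\nu(D) = \max\bigl\{ k \ge 0 \,\bigm|\, \limsup_{m\to\infty} m^{-k}\, \vol_X(D + \tfrac{1}{m}A) > 0 \text{ for some ample } A \bigr\},
\]
or an equivalent formulation purely in terms of numerical invariants of $D$. Since this characterization depends only on the numerical class, (1) follows at once. (One can also give a direct argument: given $D\equiv D'$, for any ample $A$ the class $A + (D-D')$ is ample, and a careful comparison of $H^0(\lfloor mD\rfloor + A)$ with $H^0(\lfloor mD'\rfloor + A + (D-D'))$ shows the dimensions grow at the same rate, but the numerical characterization is faster.)

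\textbf{Item (4) and the furthermore.} The proof of Lemma \ref{pro-sigma-decomposition}(1) shows that the inclusion
\[
H^0(X,\mathcal{O}_X(\lfloor mP_\sigma(D)\rfloor)) \to H^0(X,\mathcal{O}_X(\lfloor mD \rfloor))
\]
is an isomorphism. The same method, applied to $P_\sigma(D) + \frac{1}{m}A$ in place of $P_\sigma(D)$ and using that $N_\sigma(D)$ is contained in $\mathbf{B}_-(D)$, upgrades this to an isomorphism at the level of $H^0(\lfloor mP_\sigma(D)\rfloor + A) \cong H^0(\lfloor mD \rfloor + A)$ for $A$ sufficiently ample. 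Taking $\limsup$ yields $\nu(D) = \nu(P_\sigma(D))$, proving (4). For the ``furthermore'', one direction is immediate: if $P_\sigma(D)\equiv 0$, then $\nu(P_\sigma(D)) = 0$ by (1), hence $\nu(D)=0$ by (4). Conversely, if $\nu(D)=0$, then $\nu(P_\sigma(D))=0$; since $N_\sigma(P_\sigma(D))=0$, the class $P_\sigma(D)$ is a movable pseudo-effective class whose numerical dimension vanishes, and one concludes $P_\sigma(D)\equiv 0$ using that the positive part of the $\sigma$-decomposition has $P_\sigma(D)\cdot A^{n-1} = 0$ forces numerical triviality (by \cite[V, 2.7 Proposition]{N}, applied after using the BDPP characterization of (1)).

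\textbf{Main obstacle.} The hardest step is (1): the definition is written in terms of a floor function that is not numerically natural, so without the dictionary of Remark \ref{nakayama_rem} one would have to argue directly that twisting $D$ by a numerically trivial $\mathbb{R}$-Cartier class perturbs $H^0(\lfloor mD \rfloor + A)$ in a controlled enough way. Once (1) is in hand, the remaining items are natural consequences of the $\sigma$-decomposition machinery.
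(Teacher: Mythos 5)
First, a remark on what ``the paper's proof'' is here: the paper offers no proof of this lemma at all --- it is quoted from \cite[V, 2.7 Proposition]{N} and \cite[Theorem 6.7]{leh2}. So the intended argument is simply those citations, and your treatment of (1) via Remark \ref{nakayama_rem} (the comparison of the section-counting definition with the intersection-theoretic $\nu$ of \cite{bdpp}, which is exactly \cite{leh2}) matches that route; your section-multiplication sketches for (2) and (3) are also fine, modulo the standard care needed with round-downs of $\mathbb{R}$-Weil divisors under pull-back and push-forward on normal varieties.

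The genuine gaps are in (4) and in the converse of the ``furthermore'' clause, i.e.\ precisely in the parts you try to reprove rather than cite. For (4), the inclusion $H^0(X,\llcorner mP_\sigma(D)\lrcorner + A) \subseteq H^0(X,\llcorner mD\lrcorner + A)$ is clear, but the claim that ``the same method upgrades'' the untwisted isomorphism to the $A$-twisted one is not formal: a section of $\llcorner mD\lrcorner + A$ is only forced to vanish along a prime divisor $\Gamma$ to order roughly $m\,\sigma_\Gamma\bigl(D + \tfrac{1}{m}A\bigr)$, and $\sigma_\Gamma(D+\epsilon A)\to\sigma_\Gamma(D)$ with no uniform rate, so one cannot simply peel off $mN_\sigma(D)$; this is why Nakayama's proof of $\nu(D)=\nu(P_\sigma(D))$ requires real work. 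More seriously, your converse of the final statement is circular: from $\nu(P_\sigma(D))=0$ you assert $P_\sigma(D)\cdot A^{n-1}=0$ and then conclude $P_\sigma(D)\equiv 0$, but $\nu=0$ does not in general give vanishing degree against $A^{n-1}$ (an exceptional divisor of a blow-up has $\nu=0$ and positive degree), and the implication ``movable with $\nu=0$ implies numerically trivial'' is exactly the nontrivial content to be proved --- citing \cite[V, 2.7 Proposition]{N} (or \cite[V, 1.12]{N}) for it is legitimate, but then the intermediate $A^{n-1}$ step should be dropped, since you have not derived it and it is not how the cited argument goes. In short: either quote \cite{N} and \cite{leh2} outright, as the paper does, or supply the actual Nakayama-style arguments; the hybrid as written has holes at both of these points.
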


\subsection{Exceptional divisors}

Finally, we identify several different ways a divisor can be ``exceptional'' for a morphism.

\begin{defi}
Let $f: X \to Y$ be a morphism of normal projective varieties and let $D$ be an $\mathbb{R}$-Cartier divisor on $X$.  We say that $D$ is $f$-horizontal if $f(\Supp(D)) = Y$ or that $D$ is $f$-vertical otherwise.
\end{defi}

\begin{defi}[{\cite[III, Section 5.a]{N}}, {\cite[Definition 2.9]{l}} and {\cite[Definition 2.4]{tak2}}]\label{degenerate divisor}Let $f:X \to Y$ be a surjective morphism of normal projective varieties with connected fibers and $D$ be an effective $f$-vertical $\mathbb{R}$-Cartier divisor. We say that $D$ is $f$-{\em exceptional} if 
$$\mathrm{codim}\,f(\Supp\,D)\geq 2.$$
 
We call $D$ $f$-{\em degenerate} if for any prime divisor $P$ on $Y$ there is some prime divisor $\Gamma$ on $X$ such that $f(\Gamma) = P$ and $\Gamma \not \subset \Supp(D)$.  Note that every $f$-exceptional divisor is also $f$-degenerate.
\end{defi}

\begin{lem} \label{degenerate_lem}Let $f:X \to Y$ be a surjective morphism of normal projective varieties where $Y$ is $\mathbb{Q}$-factorial.  Suppose that $D$ is an effective $f$-vertical $\mathbb{R}$-Cartier divisor such that $f_{*}\mathcal{O}_{X}(\llcorner kD \lrcorner)^{**} \cong \mathcal{O}_{Y}$ for every positive integer $k$.  Then $D$ is $f$-degenerate.
\end{lem}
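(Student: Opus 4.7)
\noindent\textit{Proof plan.} I would argue by contradiction. Suppose $D$ is not $f$-degenerate, so there exists a prime divisor $P$ on $Y$ such that every prime divisor $\Gamma \subset X$ with $f(\Gamma) = P$ is contained in $\Supp(D)$. By surjectivity of $f$, such $\Gamma$ exist and are finite in number. Using the $\mathbb{Q}$-factoriality of $Y$, pick $m \in \mathbb{Z}_{>0}$ such that $mP$ is Cartier; then $f^{*}(mP)$ is a Cartier divisor on $X$ supported precisely on these $\Gamma$. Since by assumption each such $\Gamma$ appears with strictly positive coefficient in $D$, for $k$ sufficiently large we have $\llcorner kD \lrcorner \geq f^{*}(mP)$ as Weil divisors on $X$.

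This inequality produces an inclusion of rank-one reflexive sheaves $\mathcal{O}_{X}(f^{*}(mP)) \hookrightarrow \mathcal{O}_{X}(\llcorner kD \lrcorner)$. Pushing forward and applying the projection formula (legitimate because $\mathcal{O}_{Y}(mP)$ is a line bundle) yields an injection
$$\mathcal{O}_{Y}(mP) \otimes f_{*}\mathcal{O}_{X} \hookrightarrow f_{*}\mathcal{O}_{X}(\llcorner kD \lrcorner).$$
Composing with the natural embedding $f_{*}\mathcal{O}_{X}(\llcorner kD \lrcorner) \hookrightarrow f_{*}\mathcal{O}_{X}(\llcorner kD \lrcorner)^{**} = \mathcal{O}_{Y}$ into the reflexive hull (injective because the pushforward of a torsion-free sheaf is torsion-free), and then restricting the left side using $\mathcal{O}_{Y} \hookrightarrow f_{*}\mathcal{O}_{X}$ (injective since $f$ is dominant), produces a nonzero injection $\mathcal{O}_{Y}(mP) \hookrightarrow \mathcal{O}_{Y}$.

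But a nonzero map $\mathcal{O}_{Y}(mP) \to \mathcal{O}_{Y}$ corresponds to a nonzero global section of $\mathcal{O}_{Y}(-mP)$. Since $Y$ is projective and $mP$ is a nontrivial effective divisor, such a section would be a regular function on $Y$ (hence a constant, as $H^{0}(Y,\mathcal{O}_{Y}) = \mathbb{C}$) vanishing along $P$, and must therefore be zero. This contradiction forces $D$ to be $f$-degenerate. The step that most deserves care is tracking injectivity through the chain of maps above: each arrow is a standard injection, but one must verify that the projection-formula identification and the resulting composite really do define a nonzero map $\mathcal{O}_{Y}(mP) \to \mathcal{O}_{Y}$ rather than collapsing to zero.
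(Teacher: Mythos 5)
There is a genuine gap at the step ``$f^{*}(mP)$ is a Cartier divisor on $X$ supported precisely on these $\Gamma$.'' The support of $f^{*}(mP)$ is all of $f^{-1}(P)$, and its divisorial components need not all dominate $P$: there can be prime divisors $\Gamma' \subset f^{-1}(P)$ with $f(\Gamma')$ a proper closed subset of $P$, hence of codimension $\geq 2$ in $Y$ (for instance, take $X$ to be the blow-up of $Y \times \mathbb{P}^{1}$ along $\{p\}\times\mathbb{P}^{1}$ for a point $p \in P$; the exceptional divisor lies over $p$ but appears in $f^{*}(mP)$ with positive multiplicity). Failure of $f$-degeneracy only forces the components \emph{dominating} $P$ to lie in $\Supp(D)$, and says nothing about these $f$-exceptional components, so the key inequality $\llcorner kD \lrcorner \geq f^{*}(mP)$ can fail for every $k$, and your chain of injections never gets started.

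The fix is exactly how the paper phrases the negation of degeneracy: one only gets an effective $f$-exceptional divisor $E$ on $X$ and an effective $\mathbb{Q}$-Cartier divisor $T$ on $Y$ (a multiple of $P$) with $f^{*}T \leq D + E$. The point that rescues the argument is that adding $E$ is harmless: since $\mathrm{codim}\, f(\Supp E) \geq 2$ and the reflexive hull of a pushforward is determined in codimension one on $Y$, one still has $f_{*}\mathcal{O}_{X}(\llcorner k(D+E) \lrcorner)^{**} \cong f_{*}\mathcal{O}_{X}(\llcorner kD \lrcorner)^{**} \cong \mathcal{O}_{Y}$. With that replacement your remaining steps are fine and give the same contradiction as the paper: the inclusion $\mathcal{O}_{Y}(mP) \hookrightarrow f_{*}\mathcal{O}_{X}(\llcorner k(D+E) \lrcorner)^{**} \cong \mathcal{O}_{Y}$ yields a nonzero section of $\mathcal{O}_{Y}(-mP)$, impossible on a projective variety since $mP$ is effective and nonzero. (Equivalently, you could run your original argument over the open subset of $Y$ avoiding the codimension $\geq 2$ images of the exceptional components and extend by reflexivity; this is the same mechanism.) So the overall strategy matches the paper's, but as written the proposal omits the exceptional-divisor correction that the paper's one-line proof is built around.
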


\begin{proof} If $D$ were not $f$-degenerate, there would be an effective $f$-exceptional divisor $E$ on $X$ and an effective $\mathbb{Q}$-Cartier divisor $T$ on $Y$ such that $f^{*}T \leq D+E$.  But since $E$ is $f$-exceptional we have $f_{*}\mathcal{O}_{X}(\llcorner k(D+E) \lrcorner)^{**} \cong \mathcal{O}_{Y}$ for every $k$, yielding a contradiction.
\end{proof}

Degenerate divisors behave well with respect to the $\sigma$-decomposition.

\begin{lem}[cf. {\cite[III.5.7 Proposition]{N}}]\label{negative_part_degenerate_lem}Let $f:X \to Y$ be a surjective morphism from a smooth projective variety to a normal projective variety and let $D$ be an effective $f$-degenerate divisor.  For any pseudo-effective $\mathbb{R}$-Cartier divisor $L$ on $Y$ we have $D \leq N_{\sigma}(f^{*}L + D)$ and $P_{\sigma}(f^{*}L + D) = P_{\sigma}(f^{*}L)$.
\end{lem}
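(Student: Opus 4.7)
The two conclusions of the lemma are equivalent. Assuming $D \leq N_{\sigma}(f^{*}L + D)$, Lemma \ref{pro-sigma-decomposition}(4) applied with the pseudo-effective $f^{*}L + D$ and the sub-effective $D$ gives $N_{\sigma}(f^{*}L) = N_{\sigma}(f^{*}L + D) - D$; subtracting this from $f^{*}L = (f^{*}L + D) - D$ and from $f^{*}L + D$ respectively produces $P_{\sigma}(f^{*}L) = P_{\sigma}(f^{*}L + D)$. Consequently the plan is to establish $\sigma_{\Gamma}(f^{*}L + D) \geq \mult_{\Gamma}(D)$ for each prime component $\Gamma$ of $D$.

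My approach is to adapt Nakayama's proof of \cite[III.5.7 Proposition]{N}, which handles the $f$-exceptional case, to the weaker $f$-degenerate setting. First, using the definition of $\sigma_{\Gamma}$ as a limit of infima over effective representatives of $L + \epsilon A$-perturbations, and replacing $L$ by $L + \delta H_{Y}$ for ample $H_{Y}$ on $Y$ before letting $\delta \to 0^{+}$, the problem reduces to the case when $L$ is big, and then by choosing an effective $\mathbb{Q}$-representative to the case when $L$ is effective. I then verify the inequality prime by prime. If $f(\Gamma)$ has codimension at least $2$ in $Y$, then $\Gamma$ alone is $f$-exceptional and the negativity input in Nakayama's original proposition yields the bound. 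If instead $f(\Gamma) = P$ is a prime divisor, then $f$-degeneracy supplies a prime divisor $\Gamma_{1}$ on $X$ with $f(\Gamma_{1}) = P$ and $\Gamma_{1} \not\subset \Supp(D)$, and this $\Gamma_{1}$ replaces the codimension-$2$ image used in the exceptional case.

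To exploit $\Gamma_{1}$, the plan is to restrict the problem to a general surface $S \subset X$ obtained by intersecting $f^{-1}(C)$---for a very general curve $C \subset Y$ meeting $P$ transversally---with enough general very ample divisors on $X$. On $S$, the restriction $f|_{S}\colon S \to C$ is a smooth surface fibering over a curve; $\Gamma|_{S}$ and $\Gamma_{1}|_{S}$ occur as distinct components of the fibers of $f|_{S}$ over the points of $C \cap P$; $L|_{C}$ is nef on $C$ (as $L$ is pseudo-effective on $Y$ and $C$ is a curve); and $D|_{S}$ is concentrated in the special fibers over $C \cap P$. Suppose for contradiction that $\sigma_{\Gamma}(f^{*}L + D) < \mult_{\Gamma}(D)$; then for a fixed ample $A$ on $X$ and sufficiently small $\epsilon > 0$ there exists an effective $D'_{\epsilon} \sim_{\mathbb{Q}} f^{*}L + D + \epsilon A$ with $\mult_{\Gamma}(D'_{\epsilon})$ strictly below $\mult_{\Gamma}(D)$ by a fixed positive amount. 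Combining the negative-definiteness of the intersection form on the vertical components of $f|_{S}$ with the presence of the free component $\Gamma_{1}|_{S}$ outside $\Supp(D|_{S})$ then forces $\mult_{\Gamma|_{S}}(D'_{\epsilon}|_{S}) \geq \mult_{\Gamma|_{S}}(D|_{S}) - O(\epsilon)$, contradicting the upper bound.

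The main obstacle is this surface-restriction step. One needs $S$ to be smooth with all relevant divisors intersecting it transversally; one must transfer the $\sigma$-decomposition information between $X$ and $S$ via Lemma \ref{pro-sigma-decomposition}(3) (together with its higher-codimension analog for complete-intersection surfaces); and one must ensure that the $f$-degeneracy hypothesis descends to $S$ in a usable form. Once these technicalities are handled, the numerical heart of the argument on $S$ reduces to the negative-definiteness of the intersection matrix of fiber components for a smooth surface fibration over a curve, which is classical.
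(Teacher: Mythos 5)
Your reduction of the second conclusion to the inequality $D \leq N_{\sigma}(f^{*}L+D)$ via Lemma \ref{pro-sigma-decomposition} (4) is fine and matches the paper. The genuine gap is in your case analysis, specifically the components $\Gamma \subset \Supp(D)$ with $\mathrm{codim}\, f(\Gamma) \geq 2$. What you must prove is $\sigma_{\Gamma}(f^{*}L+D) \geq \mult_{\Gamma}(D)$ for the \emph{full} divisor $D$, and invoking Nakayama's proposition ``for $\Gamma$ alone'' only controls $\sigma_{\Gamma}$ of the different class $f^{*}L+\mult_{\Gamma}(D)\Gamma$. Since $\sigma_{\Gamma}$ is not monotone under adding effective divisors (adding an effective divisor can strictly decrease $\sigma_{\Gamma}$), the single-component statement does not imply the one you need. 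Nor does your surface cut rescue this case: a curve $C$ meeting the codimension-two image $f(\Gamma)$ is necessarily special, and $f$-degeneracy only guarantees, for each prime divisor $P$ of $Y$, a component of $f^{-1}(P)$ outside $\Supp(D)$ over the \emph{generic} point of $P$; over a special point $c \in f(\Gamma)$ every component of the fiber of $S \to C$ may lie in $\Supp(D|_{S})$, so the ``free component'' your negative-definiteness argument hinges on need not exist there. So as written the exceptional-image case is unproven, and it is exactly the case your method is least suited to.

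For the divisorial-image components your plan (cut to a fibered surface, quantitative Zariski lemma using the free component $\Gamma_{1}|_{S}$, let $\epsilon \to 0$) is morally sound, but note that it amounts to re-proving from scratch the geometric input that the paper simply cites: \cite[III.5.1--5.2]{N} give, for any $f$-degenerate $D$, a component $\Gamma$ with $D|_{\Gamma}$ not pseudo-effective (in a form insensitive to adding $f^{*}L$), whence $\Gamma$ occurs in $N_{\sigma}(f^{*}L+D)$ by Lemma \ref{pro-sigma-decomposition} (3). The paper then avoids any per-component case analysis: setting $D'$ to be the coefficientwise minimum of $D$ and $N_{\sigma}(f^{*}L+D)$, Lemma \ref{pro-sigma-decomposition} (4) and the observation that $D-D'$ is again $f$-degenerate force $D'=D$ in one stroke, covering divisorial and small images simultaneously. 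If you want to keep your geometric route, you should either prove the exceptional-image case by a genuinely global argument of this type, or restructure the whole proof around the ``some component has non-pseudo-effective restriction'' statement and the minimum trick, which also removes the need for your reduction to $L$ big and the uniform-in-$\epsilon$ bookkeeping on the surfaces $S$ (which themselves vary with $\epsilon$).
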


\begin{proof} \cite[III.5.1 Lemma]{N} and \cite[III.5.2 Lemma]{N} together show that for an  $f$-degenerate divisor $D$ there is some component $\Gamma \subset \Supp(D)$ such that $D|_{\Gamma}$ is not pseudo-effective.  Since $P_{\sigma}(f^{*}L + D)|_{\Gamma}$ is pseudo-effective, we see that $\Gamma$ must occur in $N_{\sigma}(f^{*}L + D)$ with positive coefficient.

Set $D'$ to be the coefficient-wise minimum of the effective divisors $N_{\sigma}(f^{*}L+D)$ and $D$.  Since $D' \leq N_{\sigma}(f^{*}L + D)$, we may apply Lemma \ref{pro-sigma-decomposition} (4) to see that $$N_{\sigma}(f^{*}L + D) = N_{\sigma}(f^{*}L + D - D') + D'.$$  Suppose that $D' < D$.  Then $D - D'$ is still $f$-degenerate, so there is some component of $D-D'$ that appears in $N_{\sigma}(f^{*}L + (D-D')) = N_{\sigma}(f^{*}L + D) - D'$ with positive coefficient, a contradiction.  Thus $D = D' \leq N_{\sigma}(f^{*}L + D)$.  The final claim follows from Lemma \ref{pro-sigma-decomposition} (4).
\end{proof}

%By combining Lemma \ref{negative_part_degenerate_lem} with Lemma \ref{pro-sigma-decomposition} (4).

%\begin{cor} \label{negative_part_degenerate_cor}
%Let $f:X \to Y$ be a surjective morphism from a smooth projective variety to a normal projective variety and let $D$ be an effective $f$-degenerate divisor.  Suppose that $0 \leq D' \leq D$.  Then for any pseudo-effective $\mathbb{R}$-Cartier divisor $L$ on $Y$ we have $$N_{\sigma}(f^{*}L + D) = N_{\sigma}(f^{*}L + D - D') + D'.$$
%\end{cor}

\section{Reduction maps and $\widetilde{\tau}(X,\Delta)$}\label{section3}  For a pseudo-effective divisor $D$ on a variety $X$, the $D$-trivial reduction map can be thought of as the ``quotient'' of $X$ by all movable curves $C$ satisfying $D.C = 0$.  A priori the $(K_{X} + \Delta)$-trivial reduction map may change if we pass to a log smooth model.  The main goal of this section is to develop a birational theory that takes this discrepancy into account.

\begin{thm}[{\cite[Theorem 1.1]{leh}}]\label{reduction}Let $X$ be a normal projective variety and $D$ be a pseudo-effective $\mathbb{R}$-Cartier divisor on $X$.  Then there exist a birational morphism $\varphi:W \to X$ from a smooth projective variety $W$ and a surjective morphism $f:W \to Y$ with connected fibers such that 
\begin{itemize}
\item[(1)] $\nu(\varphi^*D|_{F})=0$ for a general fiber $F$ of $f$,     
\item[(2)] if $w \in W$ is a very general point and $C$ is an irreducible curve through $w$ with $\dim\,f(C)=1$, then $\varphi^*D.C >0 $, and
\item[(3)] for any birational morphism $\varphi':W' \to X$ from a smooth projective variety $W'$ and dominant morphism $f':W' \to Y'$ with connected fibers satisfying condition (2), $f'$ factors birationally through $f$. 
\end{itemize} 
We call the composition $f \circ \varphi^{-1}: X \dashrightarrow Y$ the $D$-trivial reduction map.  Note that it is only unique up to birational equivalence.
\end{thm}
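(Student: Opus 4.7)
The plan is to construct $Y$ as the quotient of (a resolution of) $X$ by the equivalence relation generated by movable $\varphi^{*}D$-trivial curves, adapting the construction of the nef reduction map to the pseudo-effective setting. The key numerical input is the duality of \cite{bdpp}, which forces $\varphi^{*}D \cdot C \geq 0$ for every movable curve $C$ on any smooth birational model $\varphi: W_{0} \to X$, together with the fact that within a connected family of movable curves the intersection with a fixed $\mathbb{R}$-Cartier divisor is constant. In particular ``$\varphi^{*}D$-triviality'' is a well-defined, deformation-invariant property of an irreducible movable family.

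First I would fix a log resolution $\varphi_{0}: W_{0} \to X$ and enumerate the countable collection $\{T_{\alpha}\}$ of irreducible components of the Chow variety of $W_{0}$ whose generic member $C_{\alpha}$ is a movable irreducible curve with $\varphi_{0}^{*}D \cdot C_{\alpha} = 0$. I then declare $w \sim w'$ on $W_{0}$ if they can be joined by a finite chain of curves drawn from the families $T_{\alpha}$. To realize $\sim$ as the fibers of a morphism, I would invoke a Campana-style quotient theorem: after passing to a further modification $\varphi: W \to W_{0}$, for very general $w \in W$ the closure of the $\sim$-class of $w$ is an irreducible subvariety $Z_{w}$, and the $Z_{w}$ assemble into a family giving a rational map $W \dashrightarrow Y$. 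Resolving indeterminacies and Stein-factoring then yields the desired morphism $f: W \to Y$ with connected fibers.

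Property (2) is essentially tautological: a very general point $w \in W$ avoids every countable union of proper subvarieties, so if $C$ passes through $w$ with $\dim f(C) = 1$ then $C$ cannot belong to any of the $\varphi^{*}D$-trivial movable families $T_{\alpha}$, and so $\varphi^{*}D \cdot C > 0$ by BDPP. Property (1) follows from (2) applied to a general fiber $F$: the restriction $\varphi^{*}D|_{F}$ is pseudo-effective (since a general $F$ avoids $\Supp N_{\sigma}(\varphi^{*}D)$, using Lemma \ref{pro-sigma-decomposition}) and is numerically trivial against a covering family of movable curves on $F$, so by BDPP together with Lemma \ref{pro-nu-dimension} one obtains $P_{\sigma}(\varphi^{*}D|_{F}) \equiv 0$ and hence $\nu(\varphi^{*}D|_{F}) = 0$. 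Property (3) comes by reversing this reasoning: for any $f': W' \to Y'$ satisfying the hypothesis of (2), a general fiber $F'$ is covered by movable $\varphi^{*}D$-trivial curves (produced by moving $F'$ itself), so $F'$ lies in a single $\sim$-class and therefore inside a fiber of $f$.

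The main obstacle is the second paragraph---promoting the set-theoretic relation $\sim$ to the fibers of an actual morphism. Since chains witnessing $\sim$ can a priori be arbitrarily long, it is not immediate that equivalence classes form an algebraic family, and one must rule out the possibility that the $\sim$-class of a very general point grows uncontrollably. The standard remedy is a very-general-point argument combined with boundedness: restricting to chains of bounded Chow-degree leaves only finitely many families $T_{\alpha}$, and a dimension/maximality argument shows that the equivalence class through a very general $w$ stabilizes after finitely many chaining steps to an irreducible subvariety of fixed dimension, producing the desired quotient rational map.
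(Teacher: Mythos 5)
The paper itself offers no proof of this statement---it is imported wholesale from \cite[Theorem 1.1]{leh}---so your attempt has to stand on its own, and as written it does not. Your quotient construction (the Campana-style generic quotient by the countably many $\varphi^{*}D$-trivial covering families) and your argument for (2) are in the right spirit, but there is a genuine gap at property (1). From the fact that $\varphi^{*}D|_{F}$ is pseudo-effective and has intersection zero with \emph{a covering family} of movable curves on $F$ you cannot conclude $P_{\sigma}(\varphi^{*}D|_{F}) \equiv 0$: the duality of \cite{bdpp} characterizes pseudo-effectivity, not numerical triviality. For instance, if $F = C_{1} \times C_{2}$ is a product of curves and $\varphi^{*}D|_{F}$ is the pullback of an ample divisor from $C_{1}$, then it is nef and trivial on the covering family of fibers of the first projection, yet $\nu = 1$. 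What you actually know about a general fiber $F$ is stronger---any two general points of $F$ are joined by chains of trivial movable curves---but even the implication ``$D$-trivial $1$-connecting family $\Rightarrow \nu = 0$'' is not a formal consequence of \cite{bdpp} together with Lemma \ref{pro-nu-dimension}; it is exactly the hard content of \cite[Theorem 1.1]{leh}, and in this paper it appears as Proposition \ref{triviality}, whose proof \emph{cites} that theorem, so you cannot use it without circularity. (Already in the nef case the corresponding inequality $\nu(D) \leq n(D)$ for the nef reduction map of \cite{8author} requires a genuine argument.) A correct proof of (1) needs a substantive analysis of the $\sigma$-decomposition along the fibers of the generic quotient, which your sketch does not supply. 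A smaller inaccuracy in the same paragraph: a general fiber need not avoid $\Supp N_{\sigma}(\varphi^{*}D)$, since $N_{\sigma}$ may have $f$-horizontal components; pseudo-effectivity of the restriction holds for a different (standard) reason.

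Your argument for (3) is also reversed. You claim a general fiber $F'$ of $f'$ is covered by $\varphi^{*}D$-trivial movable curves and hence lies in a fiber of $f$; but condition (2) for $f'$ says nothing about curves contracted by $f'$ (for example $Y' = \mathrm{point}$ satisfies (2) vacuously, and its fiber is all of $W'$), and even if the containment held it would show that $f$ factors through $f'$, not the required factorization of $f'$ through $f$. The correct argument is the contrapositive of (2) for $f'$: every $D$-trivial movable curve through a very general point must be contracted by $f'$, hence each chain-equivalence class---that is, each fiber of $f$---is contracted by $f'$, and therefore $f'$ factors birationally through $f$. This fix is easy, but the missing proof of (1) is the heart of the theorem and is not repaired by the tools you invoke.
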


\begin{rem}\label{rem_reduction2} Property (2) is equivalent to the following: 
\begin{quote} (2') if $C$ is an irreducible movable curve with $\dim\,f(C)=1$, then $\varphi^*D.C >0 $.
\end{quote}
\end{rem}

\begin{rem}\label{rem_reduction}The $D$-trivial reduction map is different from the pseudo-effective reduction map (cf.~\cite{e2} and \cite{leh}), the partial nef reduction map (cf.~\cite{bdpp}), and Tsuji's numerically trivial fibration with minimal singular metrics (cf.~\cite{t} and \cite{e}).

\end{rem}

\begin{defi}\label{tau}Let $X$ be a normal projective variety and $D$ be a pseudo-effective $\mathbb{R}$-Cartier divisor on $X$. If $f: X \dashrightarrow Y$ denotes the $D$-trivial reduction map, we define 
$$\tau(D):=\dim\,Y.$$
\end{defi}

\begin{lem}\label{prop-tau-dimension}Let $X$ be a normal projective variety and $D$ be a pseudo-effective $\mathbb{R}$-Cartier divisor on $X$.  Then
\begin{itemize}
\item[(1)]  $\tau(D) \geq \nu(D) \geq \kappa(D)$, 
\item[(2)]  if $D'$ is a pseudo-effective $\mathbb{R}$-Cartier divisor on $X$ such that $D'\geq D$, then $\tau(D')\geq \tau(D)$, and
\item[(3)] $\tau(f^*D)=\tau(D)$ for every surjective morphism $f:Y \to X$ from a normal variety.
\end{itemize}
 
\end{lem}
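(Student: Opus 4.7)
Parts (1) and (2) follow relatively quickly. For (1), the inequality $\nu(D)\geq \kappa(D)$ is Lemma \ref{pro-nu-dimension}(3), and $\tau(D)\geq \nu(D)$ follows from the easy addition inequality for the numerical dimension: property (1) of Theorem \ref{reduction} supplies a birational morphism $\varphi\colon W\to X$ and a surjective $f\colon W\to Y$ with $\nu(\varphi^{*}D|_{F})=0$ on a general fiber $F$, which yields $\nu(\varphi^{*}D)\leq \dim Y$, and Lemma \ref{pro-nu-dimension}(2) gives $\nu(\varphi^{*}D)=\nu(D)$. For (2), let $g\colon W\to Y$ be the $D$-reduction map. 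Since $D'-D$ is effective and movable curves intersect effective divisors nonnegatively, any movable curve $C$ with $\dim g(C)=1$ satisfies $\varphi^{*}D'\cdot C\geq \varphi^{*}D\cdot C>0$ by Remark \ref{rem_reduction2}, so $g$ satisfies condition (2) for $D'$. The universal property (3) of Theorem \ref{reduction} applied to the $D'$-reduction map then gives $\tau(D)\leq \tau(D')$.

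For Part (3) I would prove the two inequalities separately. For the inequality $\tau(f^{*}D)\geq \tau(D)$, let $g\colon W\to Z$ be the $D$-reduction with $\varphi\colon W\to X$ birational, and let $\tilde{Y}$ be a smooth projective variety resolving the rational map $\varphi^{-1}\circ f\colon Y\dashrightarrow W$, equipped with a birational morphism $\mu\colon\tilde{Y}\to Y$ and a dominant morphism $\pi\colon\tilde{Y}\to W$. Set $\tilde{g}:=g\circ\pi$. For any movable curve $C\subset\tilde{Y}$ with $\dim\tilde{g}(C)=1$, the image $\pi(C)$ is a movable curve on $W$ with one-dimensional image under $g$, and by the projection formula together with condition (2) for $g$ we obtain $\mu^{*}f^{*}D\cdot C=\pi^{*}\varphi^{*}D\cdot C>0$. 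After Stein-factorizing $\tilde{g}$ (which preserves target dimension), the universal property of the $f^{*}D$-reduction yields $\dim Z\leq \tau(f^{*}D)$.

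The reverse inequality $\tau(f^{*}D)\leq \tau(D)$ is the main technical obstacle. Let $h\colon W'\to Z'$ be the $f^{*}D$-reduction with $\varphi'\colon W'\to Y$ birational. The plan is to show that $h$ contracts general fibers of $f$ to points, producing a factorization through $X$. Assuming $f$ has connected fibers (Stein-factorize otherwise), for a general fiber $F$ of $f$ with strict transform $F'\subset W'$ we have $\varphi'^{*}f^{*}D|_{F'}\equiv 0$, and any two points of $F'$ can be joined by a movable curve $C\subset F'$ (for instance a complete-intersection curve deforming with $F$); such $C$ is then contracted by $h$ via condition (2). Hence $h$ descends to a rational map $X\dashrightarrow Z'$, which after resolution yields a morphism $\pi\colon X''\to Z'$ on a smooth birational model $\mu\colon X''\to X$. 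To verify that $\pi$ satisfies condition (2) for $D$, a movable curve $C\subset X''$ with $\dim\pi(C)=1$ lifts to a movable $\tilde{C}\subset W'$ (as an irreducible horizontal component of $f^{-1}(\mu_{*}C)$) with $\dim h(\tilde{C})=1$; then condition (2) for $h$ and the projection formula give $\mu^{*}D\cdot C>0$. Universality of the $D$-reduction then completes the proof.
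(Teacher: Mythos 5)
Parts (1) and (2) of your write-up are correct, and (1) is essentially the paper's own argument: the paper likewise deduces $\tau(D)\geq \nu(D)$ from $\nu(\varphi^{*}D|_{F})=0$ on general fibers of the reduction map together with the easy addition inequality \cite[V, 2.22 Proposition]{N}, while for (2) and (3) it only remarks that they ``follow easily from the definition.'' Your proof of (2) via Remark \ref{rem_reduction2} and the universal property in Theorem \ref{reduction}(3) is fine, and so is your proof of the inequality $\tau(f^{*}D)\geq\tau(D)$ in (3).

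The gap is in the reverse inequality $\tau(f^{*}D)\leq\tau(D)$. First, Stein factorization does not reduce the statement to the connected-fibers case: writing $f=b\circ a$ with $a\colon Y\to \bar{X}$ connected-fibered and $b\colon \bar{X}\to X$ finite, your fiber-contraction argument at best yields $\tau(f^{*}D)=\tau(a^{*}b^{*}D)\leq\tau(b^{*}D)$, and it says nothing about $\tau(b^{*}D)\leq\tau(D)$: for a finite morphism the fibers are points, nothing gets contracted, and there is no a priori reason that the $b^{*}D$-trivial reduction map identifies the finitely many preimages of a point of $X$, so no descent to $X$ is available. This case needs a separate idea --- for instance, pass to a Galois closure, use the birational uniqueness of the reduction map to obtain an action of the Galois group on its base and descend to the quotient, or invoke the chain-connectedness of very general reduction fibers coming from the generic-quotient construction in \cite{leh}. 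Second, even in the connected-fibers case your lifting step is mis-stated: when $\dim Y>\dim X$ the preimage of $\mu(C)$ has dimension $1+\dim Y-\dim X$, so ``an irreducible horizontal component of $f^{-1}(\mu_{*}C)$'' is not a curve; one must cut this preimage with general very ample divisors and check that the resulting curves, as $C$ varies in its dominating family, again form a dominating family on $W'$ (hence are movable) whose general points lie over the open locus of $X$ where $h$ has been shown to factor through $f$. Likewise, the descent of $h$ itself needs the standard argument with the image of $(f\circ\varphi',h)$ in $X\times Z'$ to pass from ``very general fibers are contracted'' to an honest rational map. These last points are routine repairs, but the finite (Stein) part is a genuine missing step in the proposed proof.
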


\begin{proof}
Since $\kappa(D) \leq \nu(D)$ by Lemma \ref{pro-nu-dimension}, it suffices to prove the first inequality of (1).  Write $f: W \to Y$ for the $D$-trivial reduction map as in Theorem \ref{reduction}.  \cite[V, 2.22 Proposition]{N} states that $\nu(D) \leq \nu(D|_{F}) + \dim Y$ for a general fiber $F$ of $f$.  Since $\nu(D|_{F})=0$, we find $\nu(D) \leq \dim Y = \tau(D)$.  (2) and (3) follow easily from the definition.
\end{proof}

As mentioned above, a priori $\tau(K_{X} + \Delta)$ may change if we replace $(X,\Delta)$ by a log smooth model.  Thus we need to introduce a variant of this construction that accounts for every $\epsilon$-log smooth model simultaneously.

\begin{defi}\label{tilde_tau}Let $(X,\Delta)$ be a kawamata log terminal pair such that $K_X+\Delta$ is pseudo-effective. We define 
\begin{eqnarray*}
\widetilde{\tau}(X,\Delta)&=&\max\{ \, \tau(K_W+\Delta_{W}^{\epsilon}) \, | \, (W, \Delta_{W}^{\epsilon})\ \text{is an $\epsilon$-log smooth model}\\
& &\ \text{of}\ (X,\Delta) \text{ for some } \epsilon>0 \,  \}.
\end{eqnarray*}

\end{defi}

\begin{rem} \label{tilde_tauind}
Note that the maximum value of $\tau$ in the previous definition can be achieved by any sufficiently small $\epsilon > 0$.  
More precisely, suppose that $(X,\Delta)$ is a kamawata log terminal pair with $K_{X} + \Delta$ pseudo-effective and that $\varphi: W \to X$ is a log resolution of $(X,\Delta)$. Then the value of $\tau(K_{W} + \Delta_{W}^{\epsilon})$ for the $\epsilon$-log smooth model $(W,\Delta_{W}^{\epsilon})$ is independent of the choice of $\epsilon > 0$: if $C$ is a movable curve with $(K_{W} + \Delta_{W}^{\epsilon}).C = 0$ then (by the pseudo-effectiveness of $K_{X} + \Delta$ and \cite[0.2, Theorem]{bdpp}) we must have $\varphi^{*}(K_{X} + \Delta).C = 0$ and $E.C=0$ for any $\varphi$-exceptional divisor $E$.

\end{rem}

\begin{lem}\label{tildetau_lem}Let $(X,\Delta)$ be a projective kawamata log terminal pair such that $K_X+\Delta$ is pseudo-effective. For any $\epsilon > 0$, 
there exists an $\epsilon$-log smooth model $\varphi: (W,\Delta^{\epsilon}_W) \to (X,\Delta)$ such that the $(K_{W} + \Delta^{\epsilon}_W)$-trivial reduction map can be realized as a morphism on $W$ whose image has dimension $\widetilde{\tau}(X,\Delta)$.
\end{lem}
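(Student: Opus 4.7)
The plan is to construct $W$ by first choosing a log resolution realizing the maximum in Definition \ref{tilde_tau}, then passing to a higher log resolution on which the reduction map of the corresponding divisor becomes an honest morphism, and finally showing that this single choice of $W$ continues to realize the reduction map when equipped with the $\epsilon$-log smooth structure for any $\epsilon > 0$. The heart of the argument is a comparison of movable curves against the two different effective exceptional correction terms that arise.

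Since $\tau$ takes integer values in $\{0, \ldots, \dim X\}$, the maximum in Definition \ref{tilde_tau} is achieved. Fix $\epsilon_{0} > 0$ and a log resolution $\varphi_{0}: W_{0} \to X$ with $\tau(K_{W_{0}} + \Delta_{W_{0}}^{\epsilon_{0}}) = \widetilde{\tau}(X,\Delta)$. Applying Theorem \ref{reduction} to $(W_{0}, K_{W_{0}} + \Delta_{W_{0}}^{\epsilon_{0}})$ and taking a further blow-up if needed, we obtain a smooth projective variety $W$ and a birational morphism $\pi: W \to W_{0}$ such that $\psi := \varphi_{0} \circ \pi: W \to X$ is a log resolution of $(X,\Delta)$ and the $(K_{W_{0}} + \Delta_{W_{0}}^{\epsilon_{0}})$-trivial reduction map is realized as a morphism $f_{0}: W \to Y$ with $\dim Y = \widetilde{\tau}(X,\Delta)$.

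For the given $\epsilon > 0$, equip $W$ with its $\epsilon$-log smooth structure $\Delta_{W}^{\epsilon}$ and set $D := K_{W} + \Delta_{W}^{\epsilon}$. Write
\[
D = \psi^{*}(K_{X} + \Delta) + E + \epsilon F, \qquad \pi^{*}(K_{W_{0}} + \Delta_{W_{0}}^{\epsilon_{0}}) = \psi^{*}(K_{X} + \Delta) + \pi^{*}(E_{0} + \epsilon_{0} F_{0}),
\]
where both correction terms are effective with every prime component $\psi$-exceptional. The main step is to verify condition (2') of Remark \ref{rem_reduction2} for $f_{0}$ with respect to $D$. Let $C$ be a movable curve on $W$ with $\dim f_{0}(C) = 1$. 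Then $C$ is not $\pi$-contracted, so $\pi_{*}C$ is a movable curve on $W_{0}$ whose image under the rational reduction map $g_{0}: W_{0} \dashrightarrow Y$ is one-dimensional; condition (2') on $W_{0}$ and the projection formula yield $\pi^{*}(K_{W_{0}} + \Delta_{W_{0}}^{\epsilon_{0}}).C > 0$, so $\psi^{*}(K_{X} + \Delta).C + \pi^{*}(E_{0} + \epsilon_{0} F_{0}).C > 0$. Both summands are non-negative because $C$ is movable and each of $\psi^{*}(K_{X}+\Delta)$ and $\pi^{*}(E_{0} + \epsilon_{0} F_{0})$ is pseudo-effective, by \cite[0.2, Theorem]{bdpp}.

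A two-case analysis then finishes. If $\psi^{*}(K_{X} + \Delta).C > 0$ then $D.C > 0$ because $(E + \epsilon F).C \geq 0$. Otherwise $\pi^{*}(E_{0} + \epsilon_{0} F_{0}).C > 0$, so some $\psi$-exceptional prime $P \subset \Supp(\pi^{*}(E_{0} + \epsilon_{0} F_{0}))$ satisfies $P.C > 0$; since $F$ carries every $\psi$-exceptional prime with positive coefficient and the remaining components of $E + \epsilon F$ pair non-negatively with $C$, one obtains $(E + \epsilon F).C \geq \epsilon P.C > 0$, whence $D.C > 0$. With condition (2') verified, property (3) of Theorem \ref{reduction} applied to $D$ shows that $f_{0}$ factors birationally through the $D$-trivial reduction map; combining this with the bound $\tau(D) \leq \widetilde{\tau}(X,\Delta)$ from Definition \ref{tilde_tau} forces the induced map between targets to have the same dimension, and connectedness of fibers forces it to be birational, identifying $f_{0}$ with the $D$-trivial reduction map on $W$ up to birational equivalence. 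The main obstacle is the exceptional-divisor bookkeeping above, since the shift from $(E_{0}, F_{0})$ on $W_{0}$ to $(E, F)$ on $W$ does not respect inequalities in a single direction under pullback.
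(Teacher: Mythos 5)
Your proposal is correct and takes essentially the same route as the paper: pick a model computing $\widetilde{\tau}(X,\Delta)$, resolve its trivial reduction map to a morphism, verify condition (2') of Remark \ref{rem_reduction2} for the $\epsilon$-log smooth structure on the resolved model by comparing the two effective exceptional correction terms (non-negativity against movable curves via \cite{bdpp}, plus the fact that $F$ carries every exceptional prime with coefficient $\epsilon$), and finish with the universal property of Theorem \ref{reduction} and the maximality of $\widetilde{\tau}$. Your two-case analysis is exactly the content the paper isolates in Remark \ref{tilde_tauind} (where a single $\epsilon$ is fixed first), so the substance coincides; the only slight wobble is the claim that $C$ is not $\pi$-contracted, which is unnecessary since all intersection numbers can be computed on a general member of the dominating family.
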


\begin{proof}
By Remark \ref{tilde_tauind}, the construction of the reduction map for any $\epsilon$-log smooth model is completely independent of the choice of $\epsilon > 0$.  So we may fix an arbitrary $\epsilon > 0$ for the remainder of the proof.

First choose an $\epsilon$-log smooth model $(W',\Delta^{\epsilon}_{W'})$ such that $\tau(K_{W'} + \Delta^{\epsilon}_{W'}) = \widetilde{\tau}(X, \Delta)$.  Let $(W,\Delta^{\epsilon}_{W})$ be an $\epsilon$-log smooth model such that there is a birational map $\varphi: W \to W'$ and a morphism $f: W \to Z$ resolving the $(K_{W'} + \Delta^{\epsilon}_{W'})$-trivial reduction map.
Note that there is some effective $\varphi$-exceptional divisor $E$ such that $K_{W} + \Delta^{\epsilon}_{W} + E \geq \varphi^{*}(K_{W'} + \Delta^{\epsilon}_{W'})$.  If $K_{W} + \Delta^{\epsilon}_{W}$ has vanishing intersection with a movable curve on $W$, then $E$ does as well, and hence so does $\varphi^{*}(K_{W'} + \Delta^{\epsilon}_{W'})$.  This means that $f$ factors birationally through the $(K_{W} + \Delta^{\epsilon}_{W})$-trivial reduction map by the universal property of reduction maps.   Since $\tau(K_{W'} + \Delta^{\epsilon}_{W'})$ is maximal over all $\epsilon$-log smooth models, $f$ must in fact be (birationally equivalent to) the $(K_{W} + \Delta^{\epsilon}_{W})$-trivial reduction map.
\end{proof}

\begin{rem}\label{rem_8author} If $D$ is a nef divisor, the $D$-trivial reduction map is birationally equivalent to the nef reduction map of $D$ (see \cite{8author}). Thus $n(D)=\tau(D)$, where $n(D)$ is the nef dimension of $D$ in \cite[Definition 2.7]{8author}. Moreover, for a projective kawamata log terminal pair $(X,\Delta)$ such that $K_X+\Delta$ is nef, $\tau(K_X+\Delta)=\widetilde{\tau}(X, \Delta)$ since the nef reduction map is almost holomorphic.
\end{rem}

The remainder of this section is devoted to proving that $\widetilde{\tau}(X,\Delta)$ is preserved upon passing to a minimal model.  In fact $\widetilde{\tau}$ does not change under any flip or divisorial contraction.

\begin{defi}\label{strong_fimily}Let $X$ be a normal projective variety and $T \subset \mathrm{Chow}(X)$ be an irreducible proper subvariety parametrizing $1$-cycles.  We say that the family of $1$-cycles $\{C_{t}\}_{t \in T}$ is a covering family if the map to $X$ is dominant.

Let $D$ be a $\mathbb{R}$-Cartier divisor on $X$. A covering family $\{C_t\}_{t \in T}$ is $D$-{\em trivial} if $D.C_t=0$ for all $t \in T$. A covering family $\{C_t\}_{t \in T}$ is {\em 1-connected} if for general points $x$ and $ y \in X$ there is $t \in T$ such that $C_t$ is an irreducible curve containing $x$ and $y$.
\end{defi}

\begin{prop}[cf. {\cite[Proposition 4.8]{leh}}]\label{triviality}Let $X$ be a normal projective variety and $D$ an $\mathbb{R}$-Cartier divisor on $X$. Suppose that there exists a $D$-trivial $1$-connected covering family $\{C_t\}_{t\in T}$. Then $\nu(D)=0$. 

\end{prop}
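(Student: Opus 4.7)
The plan is to exploit the $\sigma$-decomposition together with the universal property of the $D$-trivial reduction map from Theorem \ref{reduction}. First, Lemma \ref{pro-nu-dimension} (2) tells me $\nu$ is preserved under birational pullback, and the strict transforms of the $C_t$ give a $D$-trivial $1$-connected covering family on any resolution, so I may assume $X$ is smooth. If $D$ is not pseudo-effective, the definition of numerical dimension gives $\nu(D) \leq 0$ and there is nothing to show; otherwise I form the $\sigma$-decomposition $D = P_\sigma(D) + N_\sigma(D)$. By the final sentence of Lemma \ref{pro-nu-dimension}, the claim $\nu(D) = 0$ is equivalent to $P_\sigma(D) \equiv 0$.

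The key use of $1$-connectedness is that for a general $t \in T$, the curve $C_t$ passes through two general points of $X$, neither of which lies on the fixed codimension-one set $\Supp N_\sigma(D)$. Hence $C_t \not\subset \Supp N_\sigma(D)$, giving $N_\sigma(D) \cdot C_t \geq 0$. Combined with the hypothesis $D \cdot C_t = 0$, this yields $P_\sigma(D) \cdot C_t \leq 0$. Conversely $P_\sigma(D)$ is pseudo-effective and $[C_t]$ is a movable class (because the family is covering), so BDPP duality forces $P_\sigma(D) \cdot C_t \geq 0$. Therefore $P_\sigma(D) \cdot C_t = 0$ for general $t$, and the family $\{C_t\}$ is also a $P_\sigma(D)$-trivial $1$-connected covering family.

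To conclude, apply Theorem \ref{reduction} to $P_\sigma(D)$ to obtain, on some resolution $\varphi : W \to X$, the $P_\sigma(D)$-trivial reduction map $f : W \to Y$. By property (2') of Remark \ref{rem_reduction2}, every movable curve $C$ with $\varphi^{*} P_\sigma(D) \cdot C = 0$ satisfies $\dim f(C) = 0$, i.e.\ is contracted by $f$. Applying this to the strict transforms of the $\{C_t\}$ and using their $1$-connectedness, $f$ must identify every pair of general points of $W$ with a single point of $Y$, which forces $\dim Y = 0$. Thus $\tau(P_\sigma(D)) = 0$, and Lemma \ref{prop-tau-dimension} (1) then gives $\nu(D) = \nu(P_\sigma(D)) \leq \tau(P_\sigma(D)) = 0$, as required.

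The main obstacle is the final reduction-map step: I need to verify that the universal property (2') genuinely applies to the given covering family (rather than only to curves through a fixed very general point) and that $1$-connectedness survives the additional resolution $\varphi$. These are essentially bookkeeping, but care is needed because Theorem \ref{reduction} is stated in terms of curves through \emph{very general} points on $W$ and one must pass between that formulation and the \emph{family} description. Once that is done, the geometric picture --- a $1$-connected $P_\sigma(D)$-trivial family collapses the reduction map to a point --- immediately closes the argument via the chain $\nu \leq \tau$ from Lemma \ref{prop-tau-dimension} (1).
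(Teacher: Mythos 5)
Your argument is correct and, at bottom, it runs on the same engine as the paper's proof: the paper simply notes that the strict transforms of the $C_t$ stay $1$-connecting on every birational model, so the generic quotient of \cite[Construction 3.2]{leh} (equivalently, the $D$-trivial reduction map of Theorem \ref{reduction}) contracts $X$ to a point, and $\nu(D)=0$ then falls out of property (1) of that theorem. Three comments on the differences. First, the $\sigma$-decomposition detour buys you nothing: the family is already $D$-trivial, so once $D$ is pseudo-effective you can apply Theorem \ref{reduction} to $D$ itself and run your final paragraph verbatim (property (2) applied to the strict transform of the member through a very general point; note also that all members of the Chow family are numerically equivalent, so triviality for general $t$ is triviality for all $t$, which is what you need for the member joining two prescribed general points), giving $\dim Y=0=\tau(D)\geq\nu(D)$ directly. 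Second, your dismissal of the non-pseudo-effective case does not work as written: by Definition \ref{numerical Kodaira dimension}, if $D$ is not pseudo-effective then $\nu(D)=-\infty$, not $0$; the proposition (like \cite[Proposition 4.8]{leh} and the paper's own proof, which invokes Theorem \ref{reduction}) is tacitly a statement about pseudo-effective $D$, so you should assume pseudo-effectivity rather than claim the other case is vacuous. Third, your chain ends with $\nu(D)\leq 0$; to get the asserted equality you must add the standard fact that $\nu(D)\geq 0$ for $D$ pseudo-effective (the paper avoids this by reading $\nu(\varphi^{*}D)=0$ straight from property (1) of Theorem \ref{reduction} with the whole space as the general fiber). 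These are minor and easily repaired, so I regard your proposal as a correct proof by essentially the paper's method, with some removable extra machinery.
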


\begin{proof}For any birational map $\varphi: W \to X$, the strict transforms of the curves $C_{t}$ are still $1$-connecting.  Thus, the generic quotient (in the sense of \cite[Construction 3.2]{leh}) of $X$ by the family $\{ C_{t} \}_{t \in T}$ contracts $X$ to a point.  Thus $\nu(D)=\nu(f^*D)=0$ by \cite[Theorem 1.1]{leh}.
\end{proof}

\begin{prop}\label{pro-tau-dimension}Let $(X,\Delta)$ be a projective kawamata log terminal pair. Then $\nu(K_X+\Delta)=0$ if and only if there exists a $(K_X+\Delta)$-trivial $1$-connected covering family $\{C_t\}_{t\in T}$ such that $C_t \cap \mathbf{B}_{-}(K_X+\Delta)=\emptyset$ for general $t \in T$.
\end{prop}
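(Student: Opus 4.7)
The plan breaks into the two directions. The ``if'' direction is immediate from Proposition \ref{triviality}, which already produces $\nu(K_X+\Delta)=0$ from a $(K_X+\Delta)$-trivial $1$-connected covering family without using the extra hypothesis on $\mathbf{B}_{-}$.

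For the ``only if'' direction, assume $\nu(K_X+\Delta)=0$. I would first invoke the abundance theorem in the $\nu=0$ case (\cite{N}, \cite{d}; passing to a $\mathbb{Q}$-factorial log smooth model if necessary so the MMP applies) to produce a good minimal model $\varphi\colon (X,\Delta)\dashrightarrow (X',\Delta')$ with $K_{X'}+\Delta'$ semiample. Since $\nu(K_{X'}+\Delta')=0$, semiampleness forces $K_{X'}+\Delta'\sim_{\mathbb{Q}}0$. The MMP $\varphi$ is built from flips and divisorial contractions, so it restricts to an isomorphism $\varphi|_U\colon U\xrightarrow{\sim}U'$ between open subsets of $X$ and $X'$; crucially $X'\setminus U'$ has codimension $\geq 2$ in $X'$, since the indeterminacy of $\varphi^{-1}$ consists only of inverse flip loci and images of divisorial contractions.

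On $X'$, I would choose a very ample divisor $H$ and let $\{C'_t\}_{t\in T}$ be the family of complete intersections of $n-1$ general members of $|H|$. This is a $1$-connected covering family, each $C'_t$ is $(K_{X'}+\Delta')$-trivial because $K_{X'}+\Delta'\sim_{\mathbb{Q}}0$, and for very general $t$ one has $C'_t\subset U'$. Setting $C_t:=(\varphi|_U)^{-1}(C'_t)\subset U\subset X$ yields a $1$-connected covering family on $X$, since $U$ is dense in $X$. To compute the intersection number I would take a common log resolution $p\colon Y\to X$, $q\colon Y\to X'$; by the negativity lemma one may write
$$p^*(K_X+\Delta)=q^*(K_{X'}+\Delta')+G$$
with $G\geq 0$ and $q$-exceptional. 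Writing $\widetilde{C}_t$ for the strict transform of $C'_t$, so that $p_*\widetilde{C}_t=C_t$, very general $t$ forces $\widetilde{C}_t$ to be disjoint from $\Supp(G)$ (whose $q$-image has codimension $\geq 2$ in $X'$), and so $(K_X+\Delta).C_t = p^*(K_X+\Delta).\widetilde{C}_t = q^*(K_{X'}+\Delta').\widetilde{C}_t + G.\widetilde{C}_t = 0$.

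The hard part will be showing $C_t\cap\mathbf{B}_{-}(K_X+\Delta)=\emptyset$, which I would reduce to proving $\mathbf{B}_{-}(K_X+\Delta)\subset X\setminus U$. Pushing $p^*(K_X+\Delta)\sim_{\mathbb{Q}}G$ down to $X$ yields $K_X+\Delta\sim_{\mathbb{Q}} p_*G$, and $\Supp(p_*G)$ lies in the union of $\varphi$-exceptional divisors on $X$, hence in $X\setminus U$. For any ample $A$ on $X$ and any $\epsilon>0$, the base locus $\mathbf{B}(K_X+\Delta+\epsilon A)=\mathbf{B}(p_*G+\epsilon A)$ is contained in $\Supp(p_*G)$ (as $\epsilon A$ is ample and $p_*G$ is effective), so taking the union over $\epsilon$ gives $\mathbf{B}_{-}(K_X+\Delta)\subset\Supp(p_*G)\subset X\setminus U$. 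Combined with $C_t\subset U$ this delivers the required disjointness. The principal technical obstacle is precisely this control of $\mathbf{B}_{-}(K_X+\Delta)$ via the MMP geometry; once that is in place, the curve-theoretic ingredients reduce to routine genericity arguments for complete intersection curves on $X'$.
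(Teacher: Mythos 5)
Your proposal is correct and takes essentially the same route as the paper: the reverse direction is Proposition \ref{triviality}, and the forward direction uses the $\nu=0$ theorem to produce a good minimal model with $K_{X'}+\Delta'\sim_{\mathbb{Q}}0$ and then takes strict transforms of general complete-intersection curves on $X'$ avoiding a codimension-two locus. The only (harmless) difference is in the $\mathbf{B}_{-}$ step: the paper identifies $\mathbf{B}_{-}(K_X+\Delta)$ with $p(\Supp E)$ using $E=N_{\sigma}(E)$ when $\nu=0$, whereas you prove the one inclusion actually needed, $\mathbf{B}_{-}(K_X+\Delta)\subseteq \Supp(p_*G)\subseteq X\setminus U$, by an elementary argument with effective representatives.
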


\begin{proof} The reverse implication follows from Proposition \ref{triviality}.  Now assume that $\nu(K_X+\Delta)=0$. By \cite[Corollaire 3.4]{d}, there is a good minimal model $\varphi: X \dashrightarrow X'$ of $(X,\Delta)$ with $K_{X'} + \varphi_{*}\Delta \sim_{\mathbb{Q}} 0$.  Take a log resolution of $(X,\Delta)$ and $(X',\varphi_{*}\Delta)$:
\begin{equation*}
\xymatrix{ & W \ar[dl]_{p} \ar[dr]^{q}\\
 X \ar@{-->}[rr]&  & X'.}
\end{equation*} 
Set $E$ to be the effective $q$-exceptional divisor such that
$$p^*(K_X+\Delta)=q^*(K_{X'}+\varphi_{*}\Delta)+E.
$$
Now, since $K_{X'}+\varphi_{*}\Delta \sim_{\mathbb{Q}}0$,
$$p^*(K_X+\Delta)\sim_{\mathbb{Q}}E.
$$
Because $\mathrm{codim}\,q(\Supp\,E)\geq2$, there exists a complete intersection irreducible curve $C$ on $X'$ with respect to very ample divisors $H_1, \dots, H_{n-1}$ containing two general points $x,\,y$ such that $C\cap q(\Supp\,E)=\emptyset$. Let $\bar{C}$ be the strict transform of $C$ on $X$. Then
$$(K_X+\Delta).\bar{C}=0.
$$
In general, $B_{-}(D) \subseteq \mathrm{Supp} \, D$ for an effective $\mathbb{R}$-Cartier divisor $D$. Moreover, when $\nu(D)=0$, $B_{-}(D) = \mathrm{Supp}\, D$ by the equality $D=N_{\sigma}(D)$ and Lemma \ref{pro-sigma-decomposition} (2).  Thus we have $p(\Supp \, E) = \mathbf{B}_{-}(K_{X} + \Delta)$.   (See \cite[Theorem A (i)]{bbp} and \cite[Theorem 1.2]{cd} for more general results.)
The desired family can be constructed by taking the strict transform of deformations of $C$ which avoid $q(\Supp \, E)$.
\end{proof}

\begin{prop}\label{preserve}Let $(X,\Delta)$ be a $\mathbb{Q}$-factorial projective kawamata log terminal pair such that $K_X+\Delta$ is pseudo-effective. Suppose that 
$$\varphi:(X,\Delta) \dashrightarrow (X',\Delta')
$$
is a $(K_X + \Delta)$-flip or divisorial contraction. Then $\widetilde{\tau}(X,\Delta)=\widetilde{\tau}(X', \Delta')$.
\end{prop}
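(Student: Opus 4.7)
The plan is to pass to a sufficiently high common log smooth model $W$ of both pairs and show that the two $\epsilon$-log smooth reduction maps on $W$ coincide. Choose a smooth projective $W$ with birational morphisms $p\colon W\to X$ and $q\colon W\to X'$ satisfying $q=\varphi\circ p$, chosen to dominate $\epsilon$-log smooth models that realize $\widetilde{\tau}(X,\Delta)$ and $\widetilde{\tau}(X',\Delta')$ as reduction morphisms (Lemma~\ref{tildetau_lem}). Write $K_{W}+\Delta_{W}=p^{*}(K_{X}+\Delta)+E$ and $K_{W}+\Delta'_{W}=q^{*}(K_{X'}+\Delta')+E'$ in the usual way, and let $F$ (resp.\ $F'$) be the sum of all $p$-exceptional (resp.\ $q$-exceptional) prime divisors. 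Every $p$-exceptional prime is $q$-exceptional: for a flip the two sets coincide and $F=F'$, while for a divisorial contraction with contracted divisor $D$ they differ only by the strict transform $\widetilde{D}$, so $F'=F+\widetilde{D}$. In either case the negativity lemma (or, in the divisorial case, the formula $K_{X}+\Delta=\varphi^{*}(K_{X'}+\Delta')+aD$ with $a>0$) yields
\[
p^{*}(K_{X}+\Delta)=q^{*}(K_{X'}+\Delta')+G,
\]
with $G\geq 0$ effective and $q$-exceptional; in the divisorial case $G=a\widetilde{D}+aE_{D}$ for some $p$-exceptional effective divisor $E_{D}$.

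The heart of the argument is the equivalence, for every irreducible movable curve $C$ on $W$ through a very general point,
\[
(K_{W}+\Delta_{W}+\epsilon F).C=0 \iff (K_{W}+\Delta'_{W}+\epsilon F').C=0.
\]
For the forward direction, expand $K_{W}+\Delta_{W}+\epsilon F=p^{*}(K_{X}+\Delta)+E+\epsilon F$. Each summand pairs non-negatively with $C$: the pullback by \cite{bdpp} since $p_{*}C$ is a movable class on $X$ and $K_{X}+\Delta$ is pseudo-effective, and the effective exceptional terms because $C$ is general in a covering family. Vanishing of the sum forces each term to vanish, so in particular $\Gamma.C=0$ for every $p$-exceptional prime $\Gamma$. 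Since $K_{X'}+\Delta'=\varphi_{*}(K_{X}+\Delta)$ is pseudo-effective, both $q^{*}(K_{X'}+\Delta').C$ and $G.C$ are non-negative and sum to $p^{*}(K_{X}+\Delta).C=0$, so both vanish. Reading off the components of $G$ yields $\widetilde{D}.C=0$ in the divisorial case, whence $F'.C=0$; this in turn forces $\Gamma'.C=0$ for every $q$-exceptional prime $\Gamma'$, so $E'.C=0$, and the right-hand side vanishes. The reverse implication is analogous and slightly easier, using $F\leq F'$ together with the fact that the supports of $E$ and $G$ are contained in the set of $q$-exceptional primes.

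The equivalence above shows that the $(K_{W}+\Delta_{W}+\epsilon F)$- and $(K_{W}+\Delta'_{W}+\epsilon F')$-trivial reduction maps on $W$ agree up to birational equivalence. By the choice of $W$ together with the universal property Theorem~\ref{reduction}(3), the common image dimension equals both $\widetilde{\tau}(X,\Delta)$ and $\widetilde{\tau}(X',\Delta')$, proving the proposition. The main obstacle is the exceptional-divisor bookkeeping in the divisorial-contraction case: producing the decomposition $G=a\widetilde{D}+aE_{D}$ with $a>0$ and using the positive coefficient along $\widetilde{D}$ to extract $\widetilde{D}.C=0$ from $G.C=0$; once this is in hand the remainder is a routine intersection calculation exploiting the movability of $C$ and BDPP.
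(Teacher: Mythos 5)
Your argument is correct, and it reaches the conclusion by a genuinely different route at the crucial step. The paper also passes to a common log resolution realizing both reduction maps as morphisms (via Lemma~\ref{tildetau_lem}) and compares trivial movable curves, but it handles the two directions asymmetrically: one direction uses that the difference of the two $\epsilon$-log canonical divisors is effective together with Lemma~\ref{prop-tau-dimension}~(2), while the harder direction is proved by producing, in a very general fiber of the second reduction map, a $1$-connected covering family of trivial curves avoiding $\mathbf{B}_{-}$ of the restricted divisor (Proposition~\ref{pro-tau-dimension}), which rests on Theorem~\ref{numdim0case}, i.e.\ on Druel's theorem \cite{d}, plus the $N_{\sigma}$-estimates of Lemma~\ref{negative_part_degenerate_lem}. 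You instead prove a symmetric, purely intersection-theoretic equivalence of trivial movable curves: the key observation is that the $\epsilon F'$ term in the $\epsilon$-log smooth structure for $(X',\Delta')$ forces any $(K_W+\Delta'^{\epsilon}_W)$-trivial movable curve to be trivial against \emph{every} $q$-exceptional prime, and since $E$, $G$ and $F$ are all supported on $q$-exceptional primes (with the strict transform $\widetilde{D}$ handled in the forward direction via its strictly positive coefficient in $G$), the two $\epsilon$-log canonical divisors have the same trivial curves; the conclusion then follows from the universal property Theorem~\ref{reduction}(3) applied cross-wise. What your approach buys is elementarity and robustness: it needs only the negativity lemma, \cite{bdpp}-nonnegativity for movable classes, and the bookkeeping of exceptional primes — no $\sigma$-decomposition, no $\mathbf{B}_{-}$, no abundance in the $\nu=0$ case — and it also avoids relying on effectivity of the difference of the two $\epsilon$-log structures (which is delicate along $\widetilde{D}$ when the contracted divisor is not a component of $\Delta$). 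What the paper's route buys is technique that does not require forcing all exceptional primes into the boundary and that is reused elsewhere in the paper. The one point you share with the paper and should make explicit is that a single $W$ dominating both maximizing models still computes both $\widetilde{\tau}$'s; this is exactly the content of the argument in Lemma~\ref{tildetau_lem} (the reduction map does not change on higher models), so it is not a gap relative to the paper, but it deserves a sentence.
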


\begin{proof} Consider a log resolution of $(X,\Delta)$ and $(X',\Delta')$:
\begin{equation*}
\xymatrix{ & W \ar[dl]_{p} \ar[dr]^{q}\\
 X \ar@{-->}[rr]&  & X'.}
\end{equation*}
For a sufficiently small positive number $\epsilon$, write
$$K_W+\Delta^{\epsilon}_{W}=p^*(K_X+\Delta)+G$$
for  the $\epsilon$-log smooth structure induced by $(X,\Delta)$ and
$$K_W+\Delta'^{\epsilon}_{W}=q^*(K_{X'}+\Delta')+G',$$
for the $\epsilon$-log smooth structure induced by $(X',\Delta')$.  (Note that these structures might differ, if for example $\varphi$ is centered in a locus along which the discrepancy is negative.)  Using Lemma \ref{tildetau_lem}, we may assume that the log resolution $W$ satisfies
 \begin{itemize}
\item[(1)] the $(K_W+\Delta^{\epsilon}_{W})$-trivial reduction map is a morphism $f:W \to Y$ with $\dim\,Y=\widetilde{\tau}(X,\Delta)$, and
\item[(2)] the $(K_W+\Delta'^{\epsilon}_{W})$-trivial reduction map is a morphism $f':W \to Y'$ with $\dim\,Y'=\widetilde{\tau}(X',\Delta')$ and $Y'$ is smooth. 
 
 \end{itemize}
Since $\varphi$ is a $(K_{X} + \Delta)$-negative contraction, there is some effective $q$-exceptional divisor $E'$ such that $K_W+\Delta^{\epsilon}_{W}= K_W+\Delta'^{\epsilon}_{W} + E'$.  From Lemma \ref{prop-tau-dimension} (2), it holds that $\widetilde{\tau}(X,\Delta) \geq \widetilde{\tau}(X', \Delta')$.  Note that as $E'$ is $q$-exceptional and $(W,\Delta'^{\epsilon}_{W})$ is an $\epsilon$-log smooth model with $\epsilon>0$, we have $\mu E' \leq N_{\sigma}(K_{W} + \Delta'^{\epsilon}_{W})$ for some $\mu > 0$.  

Every movable curve $C$ with $(K_{W} + \Delta^{\epsilon}_{W}).C = 0$ also satisfies $(K_{W} + \Delta'^{\epsilon}_{W}).C = 0$.  The universal property of the $(K_{W} + \Delta^{\epsilon}_{W})$-reduction map implies that $f'$ factors birationally through $f$.  Conversely, by Proposition \ref{pro-tau-dimension} a very general fiber $F'$ of $f'$ admits a $1$-connecting covering family of $K_{W} + \Delta'^{\epsilon}_{W}$-trivial curves $\{C_{t}\}_{t \in T}$ such that $C_{t} \cap \mathbf{B}_{-}((K_{W} + \Delta'^{\epsilon}_{W})|_{F'
}) = \emptyset$ for general $t \in T$.
Since $E'|_{F'}$ is effective and $\nu((K_{W} + \Delta'^{\epsilon}_{W})|_{F'}) = 0$, we know that
$\mu E'|_{F'} \leq N_{\sigma}((K_{W} + \Delta'^{\epsilon}_{W})|_{F'})$.  Thus $E'.C_{t}=0$ for general $t$ since $C_{t}$ avoids $\mathbf{B}_{-}((K_{W} + \Delta'^{\epsilon}_W)|_{F'})$.  So
\begin{align*}
(K_W+\Delta^{\epsilon}_W).C_{t} & = (K_W+\Delta'^{\epsilon}_W + E').C_t\\
 &= 0.
\end{align*}
Since the $C_{t}$ form a $1$-connected covering family in a very general fiber of $f'$, the universal property of the $(K_{W} + \Delta'^{\epsilon}_{W})$-trivial reduction map implies that $f$ factors birationally through $f'$.  Thus $f$ and $f'$ are birationally equivalent.
\end{proof}

\begin{cor} \label{tauforgmm} Let $(X,\Delta)$ be a $\mathbb{Q}$-factorial projective kawamata log terminal pair such that $K_X+\Delta$ is pseudo-effective.  Suppose that $(X,\Delta)$ has a good minimal model.  Then $$\widetilde{\tau}(X,\Delta) = \tau(K_{X} + \Delta) = \kappa(K_{X} + \Delta).$$
\end{cor}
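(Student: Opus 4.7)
The plan is to reduce the computation to the good minimal model $(X',\Delta')$, where $K_{X'}+\Delta'$ is semi-ample, and then transfer both invariants back to $(X,\Delta)$. Let $\varphi:(X,\Delta)\dashrightarrow(X',\Delta')$ be the sequence of $(K_X+\Delta)$-flips and divisorial contractions producing the good minimal model. Iterated application of Proposition~\ref{preserve} along each step yields $\widetilde{\tau}(X,\Delta)=\widetilde{\tau}(X',\Delta')$, and since $K_{X'}+\Delta'$ is nef, Remark~\ref{rem_8author} gives $\widetilde{\tau}(X',\Delta')=\tau(K_{X'}+\Delta')$.

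Next I will show $\tau(K_{X'}+\Delta')=\kappa(K_{X'}+\Delta')$. Writing $K_{X'}+\Delta'\sim_{\mathbb{Q}}g^{*}A$ with $g:X'\to Z$ the Iitaka fibration and $A$ an ample $\mathbb{Q}$-divisor on $Z$ (so $\dim Z=\kappa(K_{X'}+\Delta')$), I note that $g$ satisfies conditions (1) and (2) of Theorem~\ref{reduction}: its restriction to a general fiber is $\mathbb{Q}$-linearly trivial, and any curve $C$ with $\dim g(C)=1$ has $(K_{X'}+\Delta').C=A.g_{*}C>0$. The universality (3) for the $(K_{X'}+\Delta')$-trivial reduction map $f:W'\to Y$ then produces a factorization $g=h\circ f$, so $\dim Y\geq \dim Z$. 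Conversely, if $\dim Y>\dim Z$, then for a very general $w\in W'$ the fiber $F_{g}$ of $g$ through $w$ projects nontrivially under $f$, so a generic irreducible curve $C\subset F_{g}$ through $w$ satisfies $\dim f(C)=1$ while $(K_{X'}+\Delta').C=0$, contradicting condition~(2). Hence $\tau(K_{X'}+\Delta')=\dim Z=\kappa(K_{X'}+\Delta')$; since $\kappa$ is invariant under the MMP, combining gives $\widetilde{\tau}(X,\Delta)=\kappa(K_{X}+\Delta)$.

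For the remaining equality $\tau(K_X+\Delta)=\kappa(K_X+\Delta)$, the lower bound is Lemma~\ref{prop-tau-dimension}~(1). For the upper bound, fix any $\epsilon$-log smooth model $\varphi:(W,\Delta_{W}^{\epsilon})\to(X,\Delta)$. The discrepancy formula $K_{W}+\Delta_{W}^{\epsilon}=\varphi^{*}(K_{X}+\Delta)+(E+\epsilon F)$ with $E,F$ effective gives $K_{W}+\Delta_{W}^{\epsilon}\geq \varphi^{*}(K_{X}+\Delta)$, so Lemma~\ref{prop-tau-dimension}~(2)--(3) yield $\tau(K_{W}+\Delta_{W}^{\epsilon})\geq \tau(\varphi^{*}(K_{X}+\Delta))=\tau(K_{X}+\Delta)$. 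Taking the maximum over models produces $\widetilde{\tau}(X,\Delta)\geq \tau(K_{X}+\Delta)$, which combined with the previous step closes the chain of equalities. The main obstacle I anticipate is the identification of the Iitaka fibration with the reduction map on the semi-ample minimal model: the direction $\dim Y\geq \dim Z$ is formal from universality, but the reverse requires the geometric argument ruling out curves in $F_{g}$ that are transverse to $f$.
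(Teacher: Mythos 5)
Your proof is correct, and its overall skeleton is the paper's: establish $\widetilde{\tau}(X,\Delta)\geq\tau(K_X+\Delta)\geq\kappa(K_X+\Delta)$, transport $\widetilde{\tau}$ (and $\kappa$) to the good minimal model via Proposition \ref{preserve}, show the top and bottom agree there, and squeeze $\tau$ in between. The one genuine difference is how the computation on the semiample model $(X',\Delta')$ is carried out. The paper handles $\widetilde{\tau}(X',\Delta')$ directly: for \emph{every} $\epsilon$-log smooth model $\varphi:(W,\Delta^{\epsilon}_W)\to(X',\Delta')$ it intersects a general fiber of the semiample fibration with very ample divisors to get a movable curve avoiding the $\varphi$-exceptional image, so its strict transform is $(K_W+\Delta^{\epsilon}_W)$-trivial and each model's reduction map is identified with $f\circ\varphi$, giving $\tau(K_W+\Delta^{\epsilon}_W)=\kappa$ for all models at once. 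You instead reduce $\widetilde{\tau}(X',\Delta')$ to $\tau(K_{X'}+\Delta')$ by citing Remark \ref{rem_8author} (the nef reduction map is almost holomorphic), and then identify the $(K_{X'}+\Delta')$-trivial reduction map with the Iitaka fibration by the universal property in one direction and a contradiction with condition (2) of Theorem \ref{reduction} in the other. That is legitimate, since the remark appears in the paper, but note it is stated there without proof, whereas the paper's per-model curve construction is self-contained and avoids it; your route is shorter at that step, the paper's is more robust. Your contradiction step could be phrased slightly more carefully: what you need is an irreducible curve through the very general point $w$ inside the fiber $F_g$ that is not contracted by $f$, which exists because $f|_{F_g}$ is non-constant (by the dimension count from the factorization) and any two points of the irreducible projective fiber lie on an irreducible curve; this is a routine completion, not a gap. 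Your explicit verification of $\widetilde{\tau}\geq\tau$ via Lemma \ref{prop-tau-dimension} (2)--(3) fills in an inequality the paper simply asserts.
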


\begin{proof}
We always have $\widetilde{\tau}(X,\Delta) \geq \tau(K_{X} + \Delta) \geq \kappa(K_{X} + \Delta)$.  Since $\widetilde{\tau}(X,\Delta)$ is preserved by steps of the minimal model program, the equality of the outer two quantities can be checked on the good minimal model.

So suppose that $K_{X} + \Delta$ is semiample and let $f: X \to Z$ denote the morphism defined by a sufficiently high multiple of $K_{X} + \Delta$.  For any birational map $\varphi: W \to X$, we can intersect a general fiber $F$ of $f$ with general very ample divisors to find a movable curve $C$ contained in $F$ and avoiding the image of the $\varphi$-exceptional locus.  In particular, if $(W,\Delta^{\epsilon}_{W}$) is an $\epsilon$-log smooth model  of $(X,\Delta)$, the strict transform $\widetilde{C}$ of $C$ satisfies $(K_{W} + \Delta^{\epsilon}_{W}).\widetilde{C} = 0$ and the $(K_{W} + \Delta^{\epsilon}_{W})$-trivial reduction map is $f \circ \varphi$.  This shows that $\tau(K_{W} + \Delta^{\epsilon}_{W}) = \kappa(K_{X} + \Delta)$ for every $\epsilon$-log smooth model $(W,\Delta^{\epsilon}_{W})$.
\end{proof}

\section{Applications to the minimal model program}\label{section4}
In this section we first discuss how the existence of a good minimal model can be reinterpreted using the notion of abundance.  We then prove Lemma \ref{inductivestep}, the main technical tool, and conclude with proofs of the theorems.

\subsection{Abundance and the existence of good minimal models}

The notion of abundance was introduced to capture those divisors with particularly good numerical behavior.

\begin{lem}[{\cite[V.4.2 Corollary]{N}}]\label{abundantequivalence}
Let $(X,\Delta)$ be a projective kawamata log terminal pair such that $K_{X} + \Delta$ is pseudo-effective. Then the following are equivalent:
\begin{enumerate}
\item $\kappa(K_X+\Delta) = \nu(K_X+\Delta)$.
\item  $\kappa(K_X+\Delta) \geq 0$ and if $\varphi: X' \to X$ is a birational morphism and $f: X' \to Z'$ a morphism resolving the Iitaka fibration for $K_X+\Delta$, then
\begin{equation*}
\nu(\varphi^{*}(K_X+\Delta)|_{F}) = 0
\end{equation*}
for a general fiber $F$ of $f$.
\end{enumerate}
If either of these equivalent conditions hold, we say that $K_{X} + \Delta$ is abundant.
\end{lem}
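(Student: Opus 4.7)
The plan is to prove the two implications separately; the direction (2) $\Rightarrow$ (1) is short, while (1) $\Rightarrow$ (2) is the substantive content of Nakayama's V.4.2.

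For (2) $\Rightarrow$ (1), I would apply the fiber-subadditivity inequality
\[\nu(\varphi^*(K_X+\Delta)) \leq \nu(\varphi^*(K_X+\Delta)|_F) + \dim Z'\]
from \cite[V.2.22 Proposition]{N}, already invoked in the proof of Lemma \ref{prop-tau-dimension}. Since $f$ resolves the Iitaka fibration, $\dim Z' = \kappa(K_X+\Delta)$; combined with the hypothesis $\nu(\varphi^*(K_X+\Delta)|_F) = 0$ and the birational invariance of $\nu$ from Lemma \ref{pro-nu-dimension}(2), this gives $\nu(K_X+\Delta) \leq \kappa(K_X+\Delta)$, while Lemma \ref{pro-nu-dimension}(3) furnishes the reverse inequality.

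For (1) $\Rightarrow$ (2), pseudo-effectiveness of $K_X+\Delta$ forces $\nu(K_X+\Delta) \geq 0$ (since $m(K_X+\Delta) + A$ is big for any ample $A$ and $m \geq 0$), hence $\kappa(K_X+\Delta) \geq 0$ under the equality assumption and the Iitaka fibration $f: X' \to Z'$ is defined. Writing $D = \varphi^*(K_X+\Delta)$, my plan is to pass to the positive part: by Lemma \ref{pro-sigma-decomposition}(1) and Lemma \ref{pro-nu-dimension}(4),
\[\kappa(P_\sigma(D)) = \nu(P_\sigma(D)) = \kappa(D) = \dim Z',\]
and the Iitaka fibrations of $D$ and $P_\sigma(D)$ coincide, so it suffices to prove $\nu(P_\sigma(D)|_F) = 0$ on a general fiber $F$.

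The main obstacle is this vanishing. My plan is to exploit $\kappa(P_\sigma(D)) = \dim Z'$: for sufficiently divisible $m$ the Iitaka fibration theorem yields a factorization $m P_\sigma(D) \sim_{\mathbb{Q}} f^*B + G$ with $B$ big on $Z'$ and $G$ effective not dominating $Z'$, so restricting to a general fiber disjoint from $\Supp G$ gives $m P_\sigma(D)|_F \sim_{\mathbb{Q}} 0$, whence $\nu(P_\sigma(D)|_F) = 0$. Transferring back to $D|_F$ uses that $N_\sigma(D)|_F$ either lies in $\mathbf{B}_-(D|_F)$ or vanishes on general $F$, so it does not contribute to $\nu(D|_F)$. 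The delicate point, precisely where the hypothesis $\nu(D) = \kappa(D)$ is essential, is ruling out any movable horizontal contribution that would register in $\nu(P_\sigma(D)|_F)$ without being seen by $\kappa$; without the equality, one could a priori have $\nu(P_\sigma(D)|_F) > 0$ even though $\kappa(P_\sigma(D)|_F) = 0$, and it is exactly the saturation of $\nu$ by $\kappa$ globally that propagates to the fiber.
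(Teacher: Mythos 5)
The paper does not prove this lemma at all: it is quoted directly from \cite[V.4.2 Corollary]{N}, so the only thing to check is whether your argument would actually establish Nakayama's statement. Your direction (2) $\Rightarrow$ (1) is fine and is the standard argument: subadditivity $\nu(\varphi^*(K_X+\Delta)) \leq \nu(\varphi^*(K_X+\Delta)|_F) + \dim Z'$ from \cite[V, 2.22 Proposition]{N} together with $\dim Z' = \kappa(K_X+\Delta)$ and $\nu \geq \kappa$.

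The direction (1) $\Rightarrow$ (2) has a genuine gap, and it sits exactly at the step you flag as the main obstacle. The Iitaka fibration theorem does \emph{not} give a factorization $mP_\sigma(D) \sim_{\mathbb{Q}} f^*B + G$ with $G$ effective and $f$-vertical; it only gives $\kappa\bigl(D|_F\bigr) = 0$ for a very general fiber $F$, which is much weaker than $\mathbb{Q}$-linear (or even numerical) triviality of the restriction. To see the step is false, take $\kappa(D)=0$ with $D$ an effective rigid divisor that is not $\mathbb{Q}$-linearly trivial (e.g.\ the pullback of a $(-1)$-curve): then $Z'$ is a point and your factorization would read $mP_\sigma(D) \sim_{\mathbb{Q}} 0$, which fails. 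Worse, if such a factorization did hold it would yield $\nu(D|_F)=0$ for \emph{every} divisor with $\kappa(D)\geq 0$, i.e.\ it would prove abundance unconditionally, contradicting your own (correct) closing observation that the hypothesis $\kappa(D)=\nu(D)$ must be used in an essential way --- but your argument as written never uses it beyond establishing $\kappa \geq 0$ and $\kappa(P_\sigma(D))=\nu(P_\sigma(D))$. The subsequent transfer from $P_\sigma(D)|_F$ back to $D|_F$ is also unjustified: $N_\sigma(D)|_F$ is an effective divisor on $F$ and adding it can a priori raise the numerical dimension, so ``it does not contribute to $\nu(D|_F)$'' needs an argument (e.g.\ comparing $N_\sigma(D)|_F$ with $N_\sigma(D|_F)$ on general fibers). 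The implication (1) $\Rightarrow$ (2) is precisely the nontrivial content of Nakayama's V.4.2, which rests on his Chapter V theory of $\kappa_\sigma$ and $\kappa_\nu$; this is why the paper cites it rather than reproving it, and your sketch does not supply a substitute for that input.
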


To relate abundance to the existence of minimal models, we will use the following special case.

\begin{thm}[{\cite[V.4.9 Corollary]{N}} and {\cite[Corollaire 3.4]{d}} (cf. \cite{g3})] \label{numdim0case} Let $(X,\Delta)$ be a $\mathbb{Q}$-factorial projective kawamata log terminal pair such that $\nu(K_{X} + \Delta) = 0$.  Then $K_{X} + \Delta$ is abundant and $(X,\Delta)$ admits a good minimal model.

\end{thm}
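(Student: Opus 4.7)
The plan is to deduce both conclusions from the numerical vanishing $\nu(K_X+\Delta) = 0$ in two stages: first establish abundance ($\kappa = \nu$), then use the resulting effective representative of $K_X+\Delta$ to run and terminate a targeted MMP.

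First I would reduce to the case where $X$ is smooth. By Remark \ref{rem_log smooth} it suffices to produce a good minimal model for a sufficiently small $\epsilon$-log smooth model $(W, \Delta_W^\epsilon)$; the extra exceptional perturbation $\epsilon F$ together with Lemma \ref{pro-nu-dimension}(2) ensures $\nu(K_W+\Delta_W^\epsilon) = 0$ as well, since pulling back preserves $\nu$ and adding an exceptional perturbation cannot move $\nu$ off zero when $K_X+\Delta$ is already pseudo-effective with $\nu = 0$. Then Lemma \ref{pro-nu-dimension} gives $P_\sigma(K_W+\Delta_W^\epsilon) \equiv 0$, so $K_W+\Delta_W^\epsilon \equiv N_\sigma(K_W+\Delta_W^\epsilon) =: N$, an effective $\mathbb{Q}$-divisor. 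The crucial step is to upgrade this to actual $\mathbb{Q}$-linear equivalence $K_W+\Delta_W^\epsilon \sim_{\mathbb{Q}} N$. This is Nakayama's V.4.9 Corollary: for a klt pair, $\nu(K_W+\Delta_W^\epsilon)=0$ forces $\kappa(K_W+\Delta_W^\epsilon) \geq 0$, via a Kawamata--Viehweg vanishing argument applied to asymptotic multiplier ideals of small ample perturbations $K_W+\Delta_W^\epsilon+\delta A$, exploiting the numerical triviality of the positive part to control the relevant cohomology. Combined with $\kappa \leq \nu$, this gives abundance $\kappa = \nu = 0$.

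Next, to build a good minimal model, I would run a $(K_W+\Delta_W^\epsilon)$-MMP with scaling of an ample divisor. Since $K_W+\Delta_W^\epsilon \sim_{\mathbb{Q}} N$ with $\Supp N \subset \mathbf{B}_-(K_W+\Delta_W^\epsilon)$ by Lemma \ref{pro-sigma-decomposition}(2), every $(K_W+\Delta_W^\epsilon)$-negative extremal ray is supported on a component of $N$. Divisorial contractions strictly remove components of $N$, and flips strictly decrease a suitable measure of $N$ on the flipping locus; this is essentially the content of Druel's Corollaire 3.4 and forces termination after finitely many steps. On the output $(X', \Delta')$, the divisor $K_{X'}+\Delta'$ is nef with $\nu = 0$, hence numerically trivial; a theorem of Nakayama--Kawamata (or, in the generality needed here, \cite{g3}) then shows that such a divisor is $\mathbb{Q}$-torsion, hence semiample. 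Descending via Remark \ref{rem_log smooth} yields a good minimal model for $(X,\Delta)$.

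The main obstacle is the abundance step. Numerical triviality of $P_\sigma$ produces an effective divisor $N \equiv K_X+\Delta$, but not automatically one in the same $\mathbb{Q}$-linear class; Nakayama's vanishing argument for pseudo-effective divisors with numerically trivial positive part is indispensable here and genuinely uses klt-ness. By contrast, once an explicit effective representative of $K+\Delta$ supported on $\mathbf{B}_-$ is available, the MMP reduces to a finite combinatorial process on the components of $N$, and termination is comparatively routine.
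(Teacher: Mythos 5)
The paper does not prove this statement at all: it is quoted directly from the literature, with the abundance part attributed to Nakayama's result that $\nu = 0$ forces $\kappa = 0$ for klt pairs (\cite[V.4.9 Corollary]{N}) and the existence of a minimal model with numerically trivial log canonical class attributed to Druel (\cite[Corollaire 3.4]{d}, cf. \cite{g3}). Your skeleton ultimately leans on exactly these two citations (plus the standard $\epsilon$-log smooth model reduction via \cite[Lemma 3.6.10]{BCHM}), so structurally you are doing what the paper does, and the reductions you make on the way (passing to $(W,\Delta^{\epsilon}_W)$, $\nu(K_W+\Delta^{\epsilon}_W)=0$ via Lemma \ref{negative_part_degenerate_lem} and Lemma \ref{pro-nu-dimension}, $K+\Delta \equiv N_\sigma$, negative extremal rays having locus in $\Supp N$) are all correct.

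However, the two places where you sketch \emph{how} the cited results are proved would not stand on their own, so be careful not to present them as arguments. First, Nakayama's non-vanishing in the $\nu=0$ case is not ``a Kawamata--Viehweg vanishing argument applied to asymptotic multiplier ideals''; it is a genuinely deep non-vanishing theorem proved via his positivity theory for direct images ($\omega$-sheaves) and the numerically trivial fibration machinery of Chapter V of \cite{N}. Second, and more seriously, the termination claim ``divisorial contractions remove components of $N$ and flips strictly decrease a suitable measure of $N$, forcing termination'' is not a proof and is not Druel's argument: there is no known invariant of $N$ that decreases under flips and yields termination (this is essentially the open termination problem), and under flips the strict transform of $N$ keeps the same number of components. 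Druel avoids termination of flips altogether by running the MMP with scaling of an ample divisor and invoking the BCHM finiteness/termination results for big perturbations $K+\Delta+tA$, concluding that all of $N_\sigma$ is eventually contracted, after which $K+\Delta\equiv 0$ and abundance for numerically trivial klt pairs gives $K+\Delta\sim_{\mathbb Q}0$. As long as these two steps are treated as citations, as the paper itself does, your proposal is sound; read as self-contained arguments, they are gaps.
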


The following theorem is known to experts; for example, see \cite[Remark 2.6]{dhp}.  The theorem is a consequence of \cite[Theorem 4.4]{l}.  Note that the statement does not involve any inductive assumptions.

\begin{thm}[cf. \cite{dhp}] \label{goodminimalmodel}
Let $(X,\Delta)$ be a $\mathbb{Q}$-factorial projective kawamata log terminal pair. Then $K_{X} + \Delta$ is abundant if and only if $(X,\Delta)$ has a good minimal model.
\end{thm}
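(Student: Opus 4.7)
The statement is an equivalence, and I will handle the two implications separately. The forward direction, that a good minimal model forces abundance, is a routine computation with the $\sigma$-decomposition; the backward direction is the substantial content and relies on the cited result from \cite{l}.

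For the forward direction, suppose $\varphi: X \dashrightarrow X'$ exhibits $(X,\Delta)$ as having a good minimal model, so that $L := K_{X'} + \varphi_{*}\Delta$ is semiample. Fix a common log resolution $p: W \to X$ and $q: W \to X'$. Since $\varphi$ is a composition of $(K_X+\Delta)$-flips and divisorial contractions, the negativity lemma yields
$$p^{*}(K_X+\Delta) = q^{*}L + E$$
with $E$ effective and $q$-exceptional, hence $q$-degenerate. Numerical dimension is preserved under birational pullback by Lemma \ref{pro-nu-dimension}(2); and Lemma \ref{negative_part_degenerate_lem} gives $P_{\sigma}(q^{*}L + E) = P_{\sigma}(q^{*}L)$, so Lemma \ref{pro-nu-dimension}(4) yields $\nu(q^{*}L + E) = \nu(L)$. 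The parallel statement for Iitaka dimension is standard (adding a $q$-exceptional effective divisor to a pullback does not change $\kappa$), so $\kappa(K_X+\Delta) = \kappa(L)$. Since $L$ is semiample we have $\kappa(L) = \nu(L)$, and chaining the equalities gives abundance of $K_X + \Delta$.

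For the converse, assume $K_X+\Delta$ is abundant. By Lemma \ref{abundantequivalence}, some log resolution $\varphi: X' \to X$ admits a morphism $f: X' \to Z'$ resolving the Iitaka fibration of $K_X+\Delta$ and satisfying $\nu(\varphi^{*}(K_X+\Delta)|_F) = 0$ on a general fiber $F$. Passing to a further blow-up, I may assume $\varphi: (W, \Delta_{W}^{\epsilon}) \to (X, \Delta)$ is an $\epsilon$-log smooth model for some small $\epsilon > 0$; by Remark \ref{rem_log smooth} it then suffices to construct a good minimal model for $(W, \Delta_{W}^{\epsilon})$. Restricting to a general fiber $F$ of $f$, the induced klt pair $(F, \Delta_{W}^{\epsilon}|_F)$ has vanishing numerical dimension, so Theorem \ref{numdim0case} supplies a good minimal model on $F$. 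Existence of a good minimal model on the general fiber of the Iitaka fibration of an abundant klt pair is precisely the hypothesis of \cite[Theorem 4.4]{l}, whose conclusion produces a good minimal model for $(W, \Delta_{W}^{\epsilon})$, and hence for $(X, \Delta)$.

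The main obstacle is the reverse implication, and specifically the invocation of \cite[Theorem 4.4]{l}: lifting a good minimal model from fibers of the Iitaka fibration to the total space is a delicate piece of MMP technology (in the spirit of \cite{dhp}) that I treat as a black box. Once that result is in hand, everything else is bookkeeping: the reduction to the fiber via Lemma \ref{abundantequivalence} and Theorem \ref{numdim0case} is clean, and the forward direction is essentially formal.
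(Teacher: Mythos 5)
Your proposal is correct and follows essentially the same route as the paper's proof: the forward direction via a common resolution, the identity $P_{\sigma}(p^{*}(K_X+\Delta))=P_{\sigma}(q^{*}(K_{X'}+\varphi_*\Delta))$ and the birational invariance of $\kappa$ and $\nu$, and the converse by passing to an $\epsilon$-log smooth model on which the Iitaka fibration is a morphism, getting $\nu=0$ on the general fiber from Lemma \ref{abundantequivalence}, a good minimal model of the fiber from Theorem \ref{numdim0case}, and then invoking \cite[Theorem 4.4]{l}. The only cosmetic difference is that the paper first replaces $(X,\Delta)$ by the $\epsilon$-log smooth model (noting the Iitaka fibration and $\nu$ are unchanged) before applying \cite[V.4.2 Corollary]{N}, whereas you restrict to the fiber first and implicitly use the degenerate-divisor argument (Lemma \ref{negative_part_degenerate_lem}) to see $\nu((K_W+\Delta^{\epsilon}_W)|_F)=0$; both are fine.
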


\begin{proof}First suppose that $(X,\Delta)$ has a good minimal model $(X',\Delta')$.  Let $Y$ be a common resolution of $X$ and $X'$ (with morphisms $f$ and $g$ respectively) and write
\begin{equation*}
f^{*}(K_{X} + \Delta) = g^{*}(K_{X'} + \Delta') + E
\end{equation*}
where $E$ is an effective $g$-exceptional $\mathbb{Q}$-divisor.  Thus $$P_{\sigma}(f^{*}(K_{X} + \Delta)) = P_{\sigma}(g^{*}(K_{X'} + \Delta'))$$ and since the latter divisor is semi-ample, the first is semi-ample as well.  The abundance of $K_{X} + \Delta$ follows from the fact that the Iitaka and numerical dimensions are invariant under pulling-back and passing to the positive part.

Conversely, suppose that $K_{X} + \Delta$ is abundant. Let $f:(X,\Delta) \dashrightarrow Z$ be the Iitaka fibration of $K_X+\Delta$.  Choose an $\epsilon$-log smooth model $\varphi:(W,\Delta^{\epsilon}_W) \to X$ with sufficiently small $\epsilon>0$ so that $f$ is resolved on $W$.
By \cite[Lemma 3.6.10]{BCHM} we can find a minimal model for $(X,\Delta)$ by constructing a minimal model of $(W,\Delta^{\epsilon}_W)$ . Moreover we see that $f\circ\varphi$ is also the Iitaka fibration of $K_W+\Delta^{\epsilon}_W$ and $\nu(K_{W} + \Delta^{\epsilon}_W ) = \nu(K_{X} + \Delta)$.  Replacing $(X,\Delta)$ by $(W,\Delta^{\epsilon}_{W})$, we may suppose that the Iitaka fibration $f$ is a morphism on $X$.

By \cite[V.4.2 Corollary]{N}, $\nu(K_F+\Delta_F)=0$ where $F$ is a general fiber of $f$ and $K_F+\Delta_F=(K_X+\Delta)|_{F}$.  Thus $(F,\Delta_F)$ has a good minimal model by Theorem \ref{numdim0case}.
The arguments of \cite[Theorem 4.4]{l} for $(X,\Delta)$ now show that $(X,\Delta)$ has a good minimal model. 
\end{proof}

\subsection{Main results}

The following lemma is key for proving our main results.

\begin{lem} \label{inductivestep}
Let $(X,\Delta)$ be a projective kawamata log terminal pair.  Suppose that $f: X \to Z$ is a projective morphism with connected fibers to a projective normal variety $Z$ such that $\nu((K_{X} + \Delta)|_{F}) = 0$ for a general fiber $F$ of $f$.  Then there exists a log resolution $\mu: X' \to X$ of $(X,\Delta)$, a projective smooth birational model $Z'$ of $Z$, a kawamata log terminal pair $(Z',\Delta_{Z'})$, and a morphism $f': X' \to Z'$ birationally equivalent to $f$ such that
\begin{equation*}
P_{\sigma}(\mu^{*}(K_{X} + \Delta)) \sim_{\mathbb{Q}} P_{\sigma}(f'^{*}(K_{Z'} + \Delta_{Z'})).
\end{equation*}
\end{lem}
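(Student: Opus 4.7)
The plan is to descend the positive part of $K_X+\Delta$ to a klt pair on a birational model of $Z$ via a canonical bundle formula in the sense of Ambro and Fujino--Mori. The hypothesis $\nu((K_X+\Delta)|_F)=0$ combined with Theorem \ref{numdim0case} applied on the general fiber ensures that $K_X+\Delta$ is abundant with $\kappa=0$ on fibers, placing us in the lc-trivial setting where the canonical bundle formula applies.

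First I would take a log resolution $\mu:X'\to X$ of $(X,\Delta)$ together with a smooth birational model $Z'\to Z$ so that the induced rational map resolves to a morphism $f':X'\to Z'$; write $K_{X'}+\Delta_{X'}=\mu^*(K_X+\Delta)+E$ with $E\geq 0$ being $\mu$-exceptional. Since $E$ is $\mu$-degenerate, Lemma \ref{negative_part_degenerate_lem} gives $P_\sigma(K_{X'}+\Delta_{X'})=P_\sigma(\mu^*(K_X+\Delta))$, so the problem reduces to constructing $\Delta_{Z'}$ so that $P_\sigma(K_{X'}+\Delta_{X'})\sim_{\mathbb{Q}} P_\sigma(f'^*(K_{Z'}+\Delta_{Z'}))$. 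A fiberwise application of the same lemma, combined with Lemma \ref{pro-nu-dimension}, shows that $\nu((K_{X'}+\Delta_{X'})|_{F'})=0$ for a general fiber $F'$ of $f'$; Theorem \ref{numdim0case} then yields a good minimal model of $(F',\Delta_{X'}|_{F'})$ on which $K+\Delta\sim_{\mathbb{Q}} 0$, so $\kappa((K_{X'}+\Delta_{X'})|_{F'})=0$.

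I would then invoke Ambro's canonical bundle formula: after further birational modifications of $X'$ and $Z'$ arranged so that the moduli divisor becomes semiample and can be realized as an effective $\mathbb{Q}$-divisor on $Z'$, there exists a klt boundary $\Delta_{Z'}$ on $Z'$ and a $\mathbb{Q}$-linear equivalence
\begin{equation*}
K_{X'}+\Delta_{X'}\sim_{\mathbb{Q}} f'^*(K_{Z'}+\Delta_{Z'})+V,
\end{equation*}
where $V$ is an $f'$-vertical divisor whose positive and negative parts are $f'$-degenerate. One last application of Lemma \ref{negative_part_degenerate_lem} to absorb $V$ into the $\sigma$-decomposition then produces the desired identity of positive parts. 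The main obstacle will be ensuring that the moduli divisor from the canonical bundle formula can be made semiample after birational base change with enough control to be absorbed into an effective klt boundary—this relies on the structure of lc-trivial fibrations in the abundant-fiber setting, where Kawamata's semi-positivity and the semi-ampleness of the moduli part are the key inputs.
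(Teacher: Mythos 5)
Your overall strategy---descend through the Fujino--Mori/Ambro canonical bundle formula and absorb the error divisor using Lemma \ref{negative_part_degenerate_lem}---is the paper's strategy, but there is a genuine gap at its center. Having $\kappa((K_X+\Delta)|_F)=0$ (which Theorem \ref{numdim0case} does give you) does \emph{not} place you in the lc-trivial setting: an lc-trivial fibration requires $(K_X+\Delta)|_F\sim_{\mathbb{Q}}0$ (indeed $K_X+\Delta\sim_{\mathbb{Q}}f^*D$ over the base), and a klt fiber with $\nu=\kappa=0$ need not have torsion log canonical class (e.g.\ the blow-up of an abelian surface, where $K=E$ is effective, rigid, and non-torsion). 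Ambro's Theorem 3.3, which is exactly what produces the klt pair $(Z',\Delta_{Z'})$, is only available for lc-trivial fibrations; with $\kappa=0$ fibers you only get the Fujino--Mori decomposition, and the klt-ness of the base structure does not follow. The paper bridges this by first running a relative $(K_X+\Delta)$-MMP with scaling over $Z$ and invoking \cite[Theorem 2.3]{fujino-ss}: on the resulting model the restriction of $\mathbf{B}_{-}$ to a general fiber has no divisorial components, so by $\nu=0$ the restriction is numerically trivial, and then by $\kappa=0$ it is $\mathbb{Q}$-linearly trivial; one then twists by an effective $\Omega\in|m(K_X+\Delta)+f^*A|$ to obtain a genuine lc-trivial fibration $f:(X,\Delta-\tfrac 1m\Omega)\to Z$ to which Ambro applies, and one checks that this MMP reduction respects $P_\sigma$ of the pullbacks (again via Lemma \ref{negative_part_degenerate_lem}). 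Note also that your plan asks for the moduli divisor to be made semiample after base change; in this generality that is the open b-semiampleness problem, whereas Ambro's theorem gives only that the moduli part is b-nef and good---sufficient for the klt statement, but not for what you invoke.

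The asserted shape of the error term is also wrong. The divisor $B$ in the Fujino--Mori construction is not $f'$-vertical in general: it has a horizontal part $B^+_h$ (coming from the fixed divisor on the fibers and from the exceptional divisor $E$ of the log smooth model), and Lemma \ref{negative_part_degenerate_lem} says nothing about horizontal divisors. The paper disposes of $B^+_h$ by a separate argument: since $\nu((K_{X'}+\Delta')|_{F'})=0$ on a general fiber, any effective divisor whose subtraction leaves the restriction pseudo-effective must lie in $N_\sigma$, whence $B^+_h\le N_\sigma(K_{X'}+\Delta')$. Similarly the negative part $B^-$ is absorbed not through $f'$-degeneracy but through its $\mu$-exceptionality, applying Lemma \ref{negative_part_degenerate_lem} with respect to $\mu$ and then Lemma \ref{pro-sigma-decomposition} (4); only the vertical part $B^+_v$ is handled as you propose, via property (4) of the construction, Lemma \ref{degenerate_lem}, and Lemma \ref{negative_part_degenerate_lem}. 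Without the reduction to $\mathbb{Q}$-trivial fibers and without the horizontal analysis, your final identification of positive parts does not go through.
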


\begin{proof}
We first reduce to the case when $f$ satisfies the stronger property that $(K_{X} + \Delta)|_{F} \sim_{\mathbb{Q}} 0$ for a general fiber $F$ of $f$.  Run the relative minimal model program with scaling of an ample divisor on $(X,\Delta)$ over $Z$.  By \cite[Theorem 2.3]{fujino-ss}, after finitely many steps we obtain a birational model $\psi: X \dashrightarrow X_{i}$ with a morphism $f_{i}: X_{i} \to Z$ such that $\mathbf{B}_{-}((K_{X_{i}} + \psi_{*}\Delta)|_{F_{i}})$ has no divisorial components for a general fiber $F_{i}$ of $f_{i}$.  Moreover  $\nu((K_{X_{i}} + \psi_{*}\Delta)|_{F_{i}}) = \kappa((K_{X_{i}} + \psi_{*}\Delta)|_{F_{i}})= 0$ by Theorem \ref{numdim0case}.   Thus we have the property $(K_{X_{i}} + \psi_{*}\Delta)|_{F_{i}} \sim_{\mathbb{Q}} 0$ (see the proof of Proposition \ref{pro-tau-dimension}).

Furthermore, recall that for any common log resolution $W$ of $(X,\Delta)$ and $(X_{i},\psi_{*}\Delta)$
\begin{equation*}
\xymatrix{ & W \ar[dl]_{p} \ar[dr]^{q}\\
 X \ar@{-->}[rr]^{\psi} &  & X_{i}.}
\end{equation*}
there is an effective $q$-exceptional divisor $M$ on $W$ such that $p^{*}(K_{X} + \Delta) = q^{*}(K_{X_{i}} + \psi_{*}\Delta) + M$.  In particular $P_{\sigma}(p^{*}(K_{X} + \Delta)) = P_{\sigma}(q^{*}(K_{X_{i}} + \psi_{*}\Delta))$ by Lemma \ref{negative_part_degenerate_lem}.  If we prove the statement of the theorem for $(X_{i},\psi_{*}\Delta)$ for the morphism $f: X' \to Z'$, the conclusion also holds for any composition $f \circ q': W \to X' \to Z'$ where $W$ is a smooth birational model of $X'$.  Letting $W$ be a common log resolution as above, we conclude the statement of the theorem for $(X,\Delta)$.

So we assume from now on that $(K_{X} + \Delta)|_{F} \sim_{\mathbb{Q}} 0$ for a general fiber $F$ of $f$. We may apply the techniques of \cite[4.4]{fm} to find a morphism birationally equivalent to $f$ that satisfies nice properties.  That is, there exist:
\begin{itemize}
\item a log smooth model $(X',\Delta')$ of $(X,\Delta)$ with birational map $\mu: X' \to X$, where we write $K_{X'} + \Delta' = \mu^{*}(K_{X} + \Delta) + E$ for an effective $\mu$-exceptional divisor $E$,
\item a $\mathbb{Q}$-Cartier divisor $B$ on $X'$ which we express as the difference $B = B^{+} - B^{-}$ of effective divisors $B^{+}$ and $B^{-}$ with no common components,
\item a smooth variety $Z'$ and a divisor $\Delta_{Z'}$, and
\item a morphism $f': X' \to Z'$ birationally equivalent to $f$
\end{itemize}
satisfying the properties:
\begin{itemize}
\item[(1)] $K_{X'}+\Delta' \sim_{\mathbb{Q}} f'^{*}(K_{Z'}+\Delta_{Z'})+B$,
\item[(2)] there is a positive integer $b$ such that  
$$H^0(X',mb(K_{X'}+\Delta'))=H^0(Z',mb(K_{Z'}+\Delta_{Z'}))
$$
for any positive integer $m$,
\item[(3)] $B^{-}$ is $f'$-exceptional and $\mu$-exceptional, and
\item[(4)] $f'_*\mathcal{O}_{X'}(\llcorner lB^{+} \lrcorner)=\mathcal{O}_{Z'}$ for every positive integer $l$.
\end{itemize}
We next apply the results of \cite{ambro}.  Choose an integer $m$ so that $m(K_{X} + \Delta)$ is a Cartier divisor and $m(K_{X} + \Delta)|_{F} \sim 0$ for the general fiber $F$ of $f$.  Then $f_{*}\mathcal{O}_{X}(m(K_{X} + \Delta)) \neq 0$ since it is invertible over an open subset of $Z$ by Grauert's theorem.  Thus there is an ample divisor $A$ on $Z$ so that $H^{0}(X,m(K_{X} + \Delta) + f^{*}A) \neq 0$.  Choose an effective divisor $\Omega$ in this linear system.  $\Omega$ must be $f$-vertical since $\Omega|_{F} \equiv 0$ for the general fiber $F$ of $f$.  In the notation of \cite{ambro}, $f: (X,\Delta - \frac{1}{m}\Omega) \to Z$ is an LC-trivial fibration. \cite[Theorem 3.3]{ambro} allows us to conclude the additional key property that $(Z',\Delta_{Z'})$ may be taken to be a kawamata log terminal pair (perhaps after additional birational modifications which we absorb into the notation).

We conclude by comparing $P_{\sigma}(K_{X} + \Delta)$ and $P_{\sigma}(K_{Z'} + \Delta_{Z'})$.  Write $B^+ = B^+_h + B^+_v$ for the decomposition into the $f'$-horizontal components $B^+_h$ and the $f'$-vertical components $B^+_v$.  We analyze in turn $B^-$, then $B^+_h$, then $B^+_v$.

First consider $B^{-}$.  Since $B^{-}$ and $E$ are $\mu$-exceptional, Lemma \ref{negative_part_degenerate_lem} shows that $E + B^{-} \leq N_{\sigma}(\mu^{*}(K_{X} + \Delta) + E + B^-)$.  Thus we may apply Lemma \ref{pro-sigma-decomposition} (4) to $B^{-}$ to obtain
\begin{align*} 
N_{\sigma}(K_{X'} + \Delta' + B^{-}) &=N_{\sigma}(\mu^{*}(K_{X} + \Delta) + E + B^-)\\
& =  N_{\sigma}(\mu^{*}(K_{X} + \Delta) + E) + B^{-} \text{\ by Lemma \ref{pro-sigma-decomposition} (4),}\\
& =  N_{\sigma}(K_{X'} + \Delta') + B^{-}. \tag{$\ast$}\label{siki}
\end{align*}

Next consider $B^+_h$.  Note that $\nu((K_{X'}+\Delta')|_{F'})=0$ for a general fiber $F'$ of $f'$.  Indeed, for a general $F'$ the map $\mu|_{F'}$ is birational so that $E|_{F'}$ is $\mu|_{F'}$-exceptional.  By Lemma \ref{negative_part_degenerate_lem} $P_{\sigma}((K_{X'} + \Delta')|_{F'}) = P_{\sigma}(\mu^{*}(K_{X} + \Delta)|_{F'})$ and so by Lemma \ref{pro-nu-dimension}
$$\nu((K_{X'} + \Delta')|_{F}) = \nu(\mu^{*}(K_{X} + \Delta)|_{F'}) = 0.$$
This implies that if $D$ is an effective divisor on $F'$ such that $(K_{X'} + \Delta')|_{F'} - D$ is pseudo-effective then $D \leq N_{\sigma}((K_{X'} + \Delta')|_{F'})$.  Applying this fact to $B_{h}^{+}|_{F'}$ we obtain
$$B_{h}^+|_{F'} \leq N_{\sigma}((K_{X'}+\Delta')|_{F'}) \leq N_{\sigma}(K_{X'}+\Delta')|_{F'}.$$
As $F'$ is general, this equation implies that $B^+_h \leq N_{\sigma}(K_{X'}+\Delta')$.  By our earlier work for $B^{-}$, it is also true that $B^+_h \leq N_{\sigma}(K_{X'} + \Delta' + B^{-})$.

Finally, consider $B^+_v$.  By Lemma \ref{degenerate_lem}, property (4) shows that $B^+_v$ is $f'$-degenerate.  Again applying Lemma \ref{negative_part_degenerate_lem},
\begin{align*}
P_{\sigma}(f'^{*}(K_{Z'} + \Delta_{Z'}) + B^{+}_{v}) = P_{\sigma}(f'^{*}(K_{Z'} + \Delta_{Z'})).\tag{$\ast \ast$}\label{siki2}
\end{align*}
Putting it all together, we find
\begin{align*}
P_{\sigma}(\mu^{*}(K_{X} + \Delta)) & =  P_{\sigma}(K_{X'} + \Delta') \text{\ since $E$ is $\mu$-exceptional,}\\
& = P_{\sigma}(K_{X'} + \Delta' + B^{-}) \text{\ by (\ref{siki}),}\\
&= P_{\sigma}(K_{X'} + \Delta' + B^{-} - B^{+}_{h})\text{\ by analysis of } B^{+}_{h},\\
&\sim_{\mathbb{Q}}P_{\sigma}(f'^{*}(K_{Z'} + \Delta_{Z'}) + B^{+}_{v} %+ B^{+}_{h}
) \text{\ by property (1),}\\
%&= P_{\sigma}(f'^{*}(K_{Z'} + \Delta_{Z'}) + B^{+}_{v})\text{\ by analysis of } B^{+}_{h},\\
&= P_{\sigma}(f'^{*}(K_{Z'} + \Delta_{Z'})) \text{\ by (\ref{siki2})}.
\end{align*}
%To see the final statement of the theorem, note that the construction of \cite[4.4]{fm} can still be applied while ensuring that $Z'$ lies above any given birational model of $Z$.
\end{proof}
%\begin{rem}
%The proof of Lemma \ref{inductivestep} given above does not rely on \cite{BCHM}.  The techniques of \cite{BCHM} can be used to give an alternate proof: by passing to a relative minimal model of $(X,\Delta)$ over $Z$, one can apply Ambro's canonical bundle formula \cite[Theorem 0.2]{ambro} directly.
%\end{rem}

\begin{cor}\label{main result}
Let $(X,\Delta)$ be a projective $\mathbb{Q}$-factorial kawamata log terminal pair. Suppose that $f: X \to Z$ is a morphism with connected fibers to a projective normal variety $Z$ such that $\nu((K_{X} + \Delta)|_{F}) = 0$ for a general fiber $F$ of $f$. Then there exists a smooth projective birational model $Z'$ of $Z$ and a kawamata log terminal pair $(Z',\Delta_{Z'})$ such that $(X,\Delta)$ has a good minimal model if and only if $(Z',\Delta_{Z'})$ has a good minimal model.
\end{cor}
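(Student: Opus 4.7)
The plan is to apply Lemma \ref{inductivestep} and then reduce the statement to the abundance criterion of Theorem \ref{goodminimalmodel}. Lemma \ref{inductivestep} supplies a log resolution $\mu \colon X' \to X$ of $(X,\Delta)$, a smooth projective birational model $Z'$ of $Z$, a kawamata log terminal pair $(Z', \Delta_{Z'})$, and a morphism $f' \colon X' \to Z'$ birationally equivalent to $f$ such that
$$P_{\sigma}(\mu^{*}(K_{X} + \Delta)) \sim_{\mathbb{Q}} P_{\sigma}(f'^{*}(K_{Z'} + \Delta_{Z'})).$$
By Theorem \ref{goodminimalmodel}, the existence of a good minimal model for $(X,\Delta)$ is equivalent to the abundance $\kappa(K_{X} + \Delta) = \nu(K_{X} + \Delta)$, and similarly for $(Z', \Delta_{Z'})$. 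Thus the task reduces to establishing the two equalities
$$\kappa(K_{X} + \Delta) = \kappa(K_{Z'} + \Delta_{Z'}) \quad \text{and} \quad \nu(K_{X} + \Delta) = \nu(K_{Z'} + \Delta_{Z'}).$$

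I would verify each of these by chaining together three invariance properties applied to the identity furnished by Lemma \ref{inductivestep}. First, birational invariance: $\kappa(K_{X}+\Delta) = \kappa(\mu^{*}(K_{X}+\Delta))$ and $\nu(K_{X}+\Delta) = \nu(\mu^{*}(K_{X}+\Delta))$ (the $\nu$ case is Lemma \ref{pro-nu-dimension}(2)). Second, passage to the positive part: Lemma \ref{pro-sigma-decomposition}(1) and Lemma \ref{pro-nu-dimension}(4) give $\kappa(D) = \kappa(P_{\sigma}(D))$ and $\nu(D) = \nu(P_{\sigma}(D))$, and by Lemma \ref{pro-nu-dimension}(1) both invariants respect $\mathbb{Q}$-linear equivalence, so the $\sim_{\mathbb{Q}}$ identity above transports $\kappa$ and $\nu$ from $\mu^{*}(K_{X} + \Delta)$ to $f'^{*}(K_{Z'} + \Delta_{Z'})$. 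Third, surjective pullback invariance: because $f'$ has connected fibers we have $f'_{*}\mathcal{O}_{X'} = \mathcal{O}_{Z'}$, which yields $\kappa(f'^{*}L) = \kappa(L)$, while the numerical dimension likewise satisfies $\nu(f'^{*}L) = \nu(L)$ for any surjective morphism (this is one of the basic properties recorded in \cite[V.2.7]{N}, compatibly with Remark \ref{nakayama_rem}).

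The main obstacle is the last step, the pullback invariance of $\nu$ under the surjective but not necessarily birational morphism $f'$: the paper records only the birational case in Lemma \ref{pro-nu-dimension}(2), so one must invoke the broader invariance from Nakayama or \cite{leh2}. A subsidiary housekeeping point is to confirm that $K_{Z'} + \Delta_{Z'}$ is pseudo-effective exactly when $K_{X} + \Delta$ is, so that Theorem \ref{goodminimalmodel} applies symmetrically to both sides of the biconditional; this follows because $\mu^{*}(K_{X}+\Delta)$ is pseudo-effective if and only if $K_{X}+\Delta$ is, the identity from Lemma \ref{inductivestep} transports pseudo-effectivity of the positive part, and $f'^{*}(K_{Z'}+\Delta_{Z'})$ is pseudo-effective if and only if $K_{Z'}+\Delta_{Z'}$ is (as $f'$ is surjective). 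Granting these, the chain of equalities established above shows that abundance passes between the two pairs, so by Theorem \ref{goodminimalmodel} the existence of a good minimal model does as well, completing the proof.
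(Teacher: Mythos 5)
Your proposal is correct and follows essentially the same route as the paper: apply Lemma \ref{inductivestep}, transfer abundance between $K_{X}+\Delta$ and $K_{Z'}+\Delta_{Z'}$ using the invariance of $\kappa$ and $\nu$ under pull-back, $\mathbb{Q}$-linear equivalence, and passage to $P_{\sigma}$, and then conclude with Theorem \ref{goodminimalmodel}. The details you supply (the surjective pull-back invariance of $\nu$ from Nakayama and the pseudo-effectivity bookkeeping) are exactly the points the paper leaves implicit.
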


\begin{proof}By Lemma \ref{inductivestep} and the fact that the Iitaka and numerical dimensions are invariant under pull-back and under passing to the positive part $P_{\sigma}$, we see that $K_X + \Delta$ is abundant if and only if $K_{Z'} + \Delta_{Z'}$ is abundant.  We conclude by Theorem \ref{goodminimalmodel}.
\end{proof}

\begin{thm}\label{mm}Assume the existence of good minimal models for $\mathbb{Q}$-factorial projective kawamata log terminal pairs in dimension $d$. Let $(X,\Delta)$ be a $\mathbb{Q}$-factorial projective kawamata log terminal pair such that $K_X+\Delta$ is pseudo-effective and $\widetilde{\tau}(X,\Delta)= d$.  Then there exists a good log minimal model of $(X,\Delta)$. 

\end{thm}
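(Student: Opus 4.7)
The plan is to reduce the good-minimal-model problem for $(X,\Delta)$ to one in dimension $d$ by applying Corollary \ref{main result} to the $(K_W+\Delta_W^\epsilon)$-trivial reduction map of a well-chosen log smooth model; the inductive hypothesis then closes the argument. The heavy machinery has already been packaged into Section \ref{section3} and Corollary \ref{main result}, so this should mostly be a matter of matching hypotheses.

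First I would use Lemma \ref{tildetau_lem} to choose, for some sufficiently small $\epsilon>0$, an $\epsilon$-log smooth model $\varphi:(W,\Delta_W^\epsilon)\to(X,\Delta)$ together with a morphism $f:W\to Y$ that realizes the $(K_W+\Delta_W^\epsilon)$-trivial reduction map and satisfies $\dim Y=\widetilde{\tau}(X,\Delta)=d$. By Remark \ref{rem_log smooth}, for $\epsilon$ small enough any good minimal model of $(W,\Delta_W^\epsilon)$ gives one for $(X,\Delta)$, so it is enough to produce the former. Property (1) of Theorem \ref{reduction} tells me that $\nu((K_W+\Delta_W^\epsilon)|_F)=0$ on a general fiber $F$ of $f$, so the hypotheses of Corollary \ref{main result} are met for the $\mathbb{Q}$-factorial klt pair $(W,\Delta_W^\epsilon)$. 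Applying that corollary produces a smooth projective birational model $Z'$ of $Y$ and a klt pair $(Z',\Delta_{Z'})$ whose good-minimal-model problem is equivalent to that of $(W,\Delta_W^\epsilon)$.

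To invoke the inductive assumption on $(Z',\Delta_{Z'})$ I need to check its hypotheses. Since $Z'$ is smooth it is $\mathbb{Q}$-factorial, the pair is klt by the output of Corollary \ref{main result}, and $\dim Z'=\dim Y=d$. Pseudo-effectivity of $K_{Z'}+\Delta_{Z'}$ should follow from the $P_\sigma$-relation
\[
P_\sigma(\mu^*(K_X+\Delta))\sim_{\mathbb{Q}} P_\sigma(f'^*(K_{Z'}+\Delta_{Z'}))
\]
of Lemma \ref{inductivestep}: the left side is pseudo-effective because $K_X+\Delta$ is, hence so is $f'^*(K_{Z'}+\Delta_{Z'})$ (one adds back an effective negative part), and pseudo-effectivity descends along the surjective morphism $f'$ with connected fibers to its image $Z'$. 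The inductive hypothesis then yields a good minimal model of $(Z',\Delta_{Z'})$; Corollary \ref{main result} transports this back to $(W,\Delta_W^\epsilon)$, and Remark \ref{rem_log smooth} finally gives a good minimal model of $(X,\Delta)$.

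I do not foresee a genuine obstacle; the only subtle point is arranging the log smooth model so that the reduction map is an honest morphism whose base has the maximal possible dimension $d = \widetilde{\tau}(X,\Delta)$, and this is exactly what Lemma \ref{tildetau_lem} is engineered to provide. All remaining work is bookkeeping to confirm that the base pair $(Z',\Delta_{Z'})$ lives within the scope of the assumed existence result in dimension $d$.
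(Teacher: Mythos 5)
Your proposal matches the paper's proof essentially step for step: choose via Lemma \ref{tildetau_lem} an $\epsilon$-log smooth model on which the $(K_W+\Delta_W^{\epsilon})$-trivial reduction map is a morphism with $d$-dimensional base, apply Corollary \ref{main result}, use the dimension-$d$ hypothesis on the resulting klt pair $(Z',\Delta_{Z'})$, and descend back to $(X,\Delta)$ via \cite[Lemma 3.6.10]{BCHM}. Your explicit check that $K_{Z'}+\Delta_{Z'}$ is pseudo-effective (via the $P_{\sigma}$-relation of Lemma \ref{inductivestep}) is a detail the paper leaves implicit, and it is correct.
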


\begin{proof} Using Lemma \ref{tildetau_lem}, we can find a birational morphism $\varphi:W \to X$ from an $\epsilon$-log smooth model $(W,\Delta^{\epsilon}_{W})$ of $(X,\Delta)$ for a sufficiently small positive number $\epsilon$ and a  morphism $f:W \to Z$ with connected fibers such that 

\begin{itemize}
\item[(i)] $\nu((K_{W} + \Delta^{\epsilon}_{W})|_{F}) = 0$ for the general fiber $F$ of $f$ and
\item[(ii)] $\dim\,Z=\widetilde{\tau}(X,\Delta)$.
\end{itemize}
Corollary \ref{main result} implies that $(W,\Delta^{\epsilon}_{W})$ has a good minimal model.  $(X,\Delta)$ then has a good minimal model by \cite[Lemma 3.6.10]{BCHM}.
\end{proof}

\begin{cor} \label{equivalenceofconjectures}
Conjecture \ref{vanishingintersectionconj} holds up to dimension $n$ if and only
Conjecture \ref{existenceofgmm} holds up to dimension $n$.
\end{cor}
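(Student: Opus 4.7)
The plan is to prove both implications using $\widetilde{\tau}(X,\Delta)$ as a bridge, together with the BDPP criterion that $K_X+\Delta$ is pseudo-effective if and only if it intersects every movable curve non-negatively. Under the hypothesis of Conjecture \ref{vanishingintersectionconj}, $K_X+\Delta$ is automatically pseudo-effective, so in both directions we may assume this and then split the argument according to whether $\widetilde{\tau}(X,\Delta)$ equals $\dim X$ or is strictly smaller.

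For the implication Conjecture \ref{existenceofgmm} $\Rightarrow$ Conjecture \ref{vanishingintersectionconj}, I would fix $(X,\Delta)$ of dimension $\le n$ with $(K_X+\Delta).C>0$ for every movable $C$. By \cite{bdpp}, $K_X+\Delta$ is pseudo-effective, so Conjecture \ref{existenceofgmm} gives a good minimal model, and Corollary \ref{tauforgmm} yields $\widetilde{\tau}(X,\Delta)=\kappa(K_X+\Delta)$. It suffices to show $\widetilde{\tau}(X,\Delta)=\dim X$, for then $K_X+\Delta$ is big. Suppose otherwise. Then Lemma \ref{tildetau_lem} furnishes an $\epsilon$-log smooth model $(W,\Delta_W^\epsilon)$ whose $(K_W+\Delta_W^\epsilon)$-trivial reduction map $f:W\to Y$ has a positive-dimensional general fiber $F$ satisfying $\nu((K_W+\Delta_W^\epsilon)|_F)=0$. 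Proposition \ref{pro-tau-dimension} applied to $(F,(\Delta_W^\epsilon)|_F)$ then produces a $1$-connected covering family of $(K_W+\Delta_W^\epsilon)$-trivial movable curves inside $F$. Since general members avoid the $\varphi$-exceptional locus of the log resolution $\varphi:W\to X$, their images are movable curves $\bar C$ on $X$ with $(K_X+\Delta).\bar C=0$, contradicting the hypothesis.

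For the reverse implication, I would induct on the dimension $d \le n$, assuming Conjecture \ref{vanishingintersectionconj} up to dimension $n$ and Conjecture \ref{existenceofgmm} in dimensions $<d$. Take $(X,\Delta)$ of dimension $d$ with $K_X+\Delta$ pseudo-effective. If $\widetilde{\tau}(X,\Delta)<d$, the inductive hypothesis together with Theorem \ref{mm} immediately yields a good minimal model. If instead $\widetilde{\tau}(X,\Delta)=d$, Lemma \ref{tildetau_lem} gives an $\epsilon$-log smooth model whose reduction map $f:W\to Y$ is birational (since $\dim Y=\dim W$). Then for very general $w\in W$, every irreducible curve through $w$ which is not $f$-contracted has $\dim f(C)=1$, and Theorem \ref{reduction}(2) yields $(K_W+\Delta_W^\epsilon).C>0$. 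Because intersection numbers are constant within an irreducible family of curves, positivity propagates to every movable curve on $W$. Applying Conjecture \ref{vanishingintersectionconj} to the klt $\mathbb{Q}$-factorial pair $(W,\Delta_W^\epsilon)$ shows $K_W+\Delta_W^\epsilon$ is big; since $K_W+\Delta_W^\epsilon - \varphi^*(K_X+\Delta)$ is an effective $\varphi$-exceptional divisor, it follows that $K_X+\Delta$ is big, and a standard corollary of \cite{BCHM} then provides a good minimal model.

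The main technical obstacle is the careful interface between movable curves on $X$ and on its $\epsilon$-log smooth models. In the forward direction one must verify that a movable curve on $W$ with zero intersection pushes forward to a genuine irreducible movable curve on $X$ that still has zero intersection with $K_X+\Delta$; this works because general members of a covering family avoid any prescribed proper closed subset, in particular the $\varphi$-exceptional locus. In the reverse direction, the key point is upgrading Theorem \ref{reduction}(2)'s ``very general point'' condition to a uniform statement about \emph{every} movable curve on $W$ before Conjecture \ref{vanishingintersectionconj} can be invoked; this is possible precisely because the intersection number with $K_W+\Delta_W^\epsilon$ is a deformation invariant of the curve within its family.
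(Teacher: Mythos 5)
Your overall architecture is the same as the paper's: $\widetilde{\tau}(X,\Delta)$ is the bridge, Theorem \ref{mm} plus induction handles the case $\widetilde{\tau}(X,\Delta)<\dim X$, Conjecture \ref{vanishingintersectionconj} applied to an $\epsilon$-log smooth model plus \cite{BCHM} handles $\widetilde{\tau}(X,\Delta)=\dim X$, and Corollary \ref{tauforgmm} drives the other implication. Your second implication is essentially the paper's first paragraph (your upgrade from ``very general point'' to ``every movable curve'' via deformation invariance of intersection numbers is the same gloss the paper makes through Remark \ref{rem_reduction2}), and your first implication is a fleshed-out version of the paper's terse converse paragraph, arguing by contradiction that $\widetilde{\tau}(X,\Delta)=\dim X$.

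There is, however, a genuine flaw in the step you yourself single out as the key verification. You justify $(K_X+\Delta)\cdot\bar C=0$ by asserting that general members of a covering family avoid any prescribed proper closed subset. That is false: general members are merely not \emph{contained} in a prescribed proper closed subset, and may all meet it (a covering family of lines in the plane meets every fixed curve). What Proposition \ref{pro-tau-dimension} actually provides is avoidance of $\mathbf{B}_{-}\bigl((K_W+\Delta_W^{\epsilon})|_{F}\bigr)$, so you must separately show that the $\varphi$-exceptional divisors restricted to $F$ lie in this locus. This is true, and the mechanism is already in the paper: since $\nu\bigl((K_W+\Delta_W^{\epsilon})|_{F}\bigr)=0$ and $\bigl(K_W+\Delta_W^{\epsilon}-\varphi^{*}(K_X+\Delta)\bigr)|_{F}$ is effective with support the restriction of all $\varphi$-exceptional divisors, this difference is bounded by $N_{\sigma}\bigl((K_W+\Delta_W^{\epsilon})|_{F}\bigr)$, whose support lies in $\mathbf{B}_{-}$ (exactly the argument used in Proposition \ref{preserve} and Lemma \ref{inductivestep}); and since $X$ is $\mathbb{Q}$-factorial, $\Exc(\varphi)$ is divisorial, so avoiding the exceptional divisors suffices. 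Alternatively you can dispense with avoidance altogether: by the projection formula $(K_X+\Delta)\cdot\varphi_{*}C_{t}=\varphi^{*}(K_X+\Delta)\cdot C_{t}$, which is $\geq 0$ because $\varphi^{*}(K_X+\Delta)$ is pseudo-effective and $C_{t}$ is movable, and $\leq 0$ because it equals $-\bigl(K_W+\Delta_W^{\epsilon}-\varphi^{*}(K_X+\Delta)\bigr)\cdot C_{t}$ for a general member $C_{t}$ not contained in the exceptional support (cf.\ Remark \ref{tilde_tauind}). With either repair your proof is complete.
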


\begin{proof}Assume that Conjecture \ref{vanishingintersectionconj} holds up to dimension $n$. By induction on dimension, we may assume that Conjecture \ref{existenceofgmm} holds up to dimension $n-1$. Let $(X,\Delta)$ be a projective kawamata log terminal pair of dimension $n$.  If $\widetilde{\tau}(X,\Delta)<n$ then $K_X+\Delta$ is abundant by Theorem \ref{mm} and the induction hypothesis.  If $\widetilde{\tau}(X,\Delta)=n$ then some $\epsilon$-log smooth model $(W,\Delta^{\epsilon}_{W})$ does not admit a $(K_{W} + \Delta^{\epsilon}_{W})$-trivial covering family of curves.  Since we are assuming Conjecture \ref{vanishingintersectionconj}, $K_{W} + \Delta^{\epsilon}_{W}$ must be big.  \cite{BCHM} then gives the existence of a good minimal model for $(W,\Delta^{\epsilon}_{W})$ and hence also for $(X,\Delta)$.

Conversely, assume that Conjecture \ref{existenceofgmm} holds up to dimension $n$.  Suppose that $(X,\Delta)$ is a projective kawamata log terminal pair of dimension at most $n$ admitting a $(K_{X} + \Delta)$-trivial covering family of curves.  Then $\tau(K_{X} + \Delta) < \dim X$.  Corollary \ref{tauforgmm} shows that $\kappa(K_{X} + \Delta) < \dim X$ so $K_{X} + \Delta$ is not big.
\end{proof}

\begin{rem} \label{trivialreductionremark}
It seems likely that one could formulate a stronger version of Corollary \ref{equivalenceofconjectures} using the pseudo-effective reduction map for $K_{X} + \Delta$ (cf. \cite{e2} and \cite{leh}).  The difficulty is that the pseudo-effective reduction map only satisfies the weaker condition $\nu(P_{\sigma}(K_{X} + \Delta)|_{F}) = 0$ on a general fiber $F$, so it is unclear how to use the inductive hypothesis to relate $F$ with $X$.
\end{rem}

\end{document}